\renewcommand{\epsilon}{\varepsilon}
\renewcommand{\rho}{\varrho}
\renewcommand{\phi}{\varphi}
\newcommand{\R}{{\mathbb R}}
\newcommand{\C}{{\mathbb C}}
\newcommand{\cB}{{\cal B}}
\newcommand{\cG}{{\cal G}}
\newcommand{\cL}{{\cal L}}
\newcommand{\cP}{{\cal P}}
\newcommand{\cX}{{\cal X}}
\newcommand{\cY}{{\cal Y}}
\begin{document}
\newtheorem{definition}{Definition}[section]
\newtheorem{theorem}[definition]{Theorem}
\newtheorem{proposition}[definition]{Proposition}
\newtheorem{corollary}[definition]{Corollary}
\newtheorem{lemma}[definition]{Lemma}
\newtheorem{remark}[definition]{Remark}
\newtheorem{conjecture}[definition]{Conjecture}
\newtheorem{problem}[definition]{Problem}
\title{A Computer-Assisted Study of Red Coral Population Dynamics}
\author{{\em Sayomi Kamimoto$^1$},
        {\em Hye Kyung Kim$^2$},
        {\em Evelyn Sander$^1$},
        {\em Thomas Wanner$^1$} \\[2ex]
        1. Department of Mathematical Sciences, George Mason University, \\
           Fairfax, VA 22030, USA\\
        2. School of Mathematics, University of Minnesota, \\
           Minneapolis, MN 55455, USA   
    }
%
\date{September 30, 2020}
\maketitle
%
%
\begin{abstract} 

We consider a 13-dimensional age-structured discrete red coral population model
varying with respect to a fitness parameter.  Our numerical 
results give a bifurcation diagram of both equilibria and stable invariant curves of orbits. 
We observe that not only for low levels of fitness, but also for high levels of fitness, 
populations are extremely vulnerable, in that they spend long time periods near extinction. 
We then use computer-assisted proofs techniques to rigorously validate the set of regular and 
bifurcation fixed points that have been found numerically. 

  \bigskip\noindent
  {\bf AMS subject classifications:} 
  Primary: 37G15, 37M20, 65G20, 65P30; Secondary: 37B35, 37C70, 65G30, 92D25, 92D40.
  
  \bigskip\noindent
  {\bf Keywords:} Bifurcations, Computer-Assisted Proofs, Red Coral, Age-Structured
  Population Models, Interval Arithmetic, Rigorous Validation
\end{abstract}
%
%
\setcounter{tocdepth}{1}
\tableofcontents
%
%
%
\section{Introduction}
\label{sec:intro}
%
%
%
%
Coral plays an important role in the marine ecosystem, and coral reefs
provide habitats to many sea animals and protect coastlines from
breaking waves and storms. Red coral is a long-lived,
slow-growing species, dwelling on Mediterranean rocky bottoms. Red coral populations are at risk due to both
global climate change and overharvesting~\cite{bramanti:etal:05a}.  
Bramanti, Iannelli, and Santangelo~\cite{bramanti:etal:09a,santangelo:etal:07a} investigated red coral
populations by scraping samples  off the coast of Italy in Calafuria
in the Western Ligurian Sea ($43^\circ 30^\prime$ N, $10^\circ 20^\prime$ E, Italy,
at a depth between 20 and 45m depth)  and observing
their growth rate over a four-year period. They used this data to
construct a Leslie-Lewis transition matrix, a static life table, and a 13-dimensional 
dynamical population model. Using this model, they studied population
trends by comparing small young colonies and bigger older colonies.
However, they only considered a small range of population trends.
In the current paper, we present a systematic
study of this  coral population model, shedding light on the long-term
dynamics of the red coral populations. We can  see the long-term
effect of change in reproduction fitness. We establish the equilibrium
structure and bifurcation points for the model, find a set of stable periodic invariant
cycles,  and show that for a large range of reproduction fitness these cycles 
get close to population extinction. 

In addition to these observations, we present and implement methods which 
allow us to rigorously validate the model's equilibrium and
bifurcation structure, including  both a saddle-node and a Neimark-Sacker
bifurcation. These validations  use a modification of the Newton-Kantorovitch type 
method developed in~\cite{sander:wanner:16a, wanner:17a, wanner:18a}.
While the previous version of this method merely used natural continuation,
this paper contains an extension of these results in which we 
consider rigorous validation using pseudo-arclength continuation~\cite{govaerts:00, keller:87}. In addition,
we use computer-assisted proof methods to prove the existence of saddle-node
and Neimark-Sacker bifurcation points on the equilibrium branch. These methods
significantly extend the range of applications of the constructive implicit
function theorem which was introduced in~\cite{sander:wanner:16a}. While for
the purposes of this paper we restrict ourselves to the case of finite-dimensional
Euclidean spaces, the results can easily be adapted to the general Banach space
setting, with little change. Thus, the pseudo-arclength results can be used 
for example in the setting of partial differenial equations, such as
the setting described in~\cite{sander:wanner:20a}. In other words, the present 
paper presents a functional analytic foundation for using pseudo-arclength
continuation in the context of computer-assisted proofs based on the 
constructive implicit function theorem presented in~\cite{sander:wanner:16a}.

The remainder of this paper is organized as follows.
We introduce the age-based red coral model in Section~\ref{sec:model}. In addition,
we present a bifurcation diagram of fixed points and stability of
the model, along with a detailed discussion of oscillations. These results show
how even at high fitness levels, the oscillations lead to extreme vulnerability of
the population. 
Section~\ref{sec:arclength} contains a functional-analytic approach to the rigorous
validation of the regular branches in the bifurcation diagram, which is based on a
constructive version of the implicit function theorem. Subsequently,
Section~\ref{sec:validation} details the validation for the three
bifurcation points on the main fixed point branch; namely,  the saddle-node
bifurcation in~\ref{subsec:saddle}, the Neimark-Sacker bifurcation
in~\ref{subsec:hopf}, and the transcritical bifurction in~\ref{subsec:transcritical}.
Section~\ref{sec:con} contains conclusions and future work.
\begin{figure}[tb] \centering
  \setlength{\unitlength}{1 cm}
    \includegraphics[height=5cm]{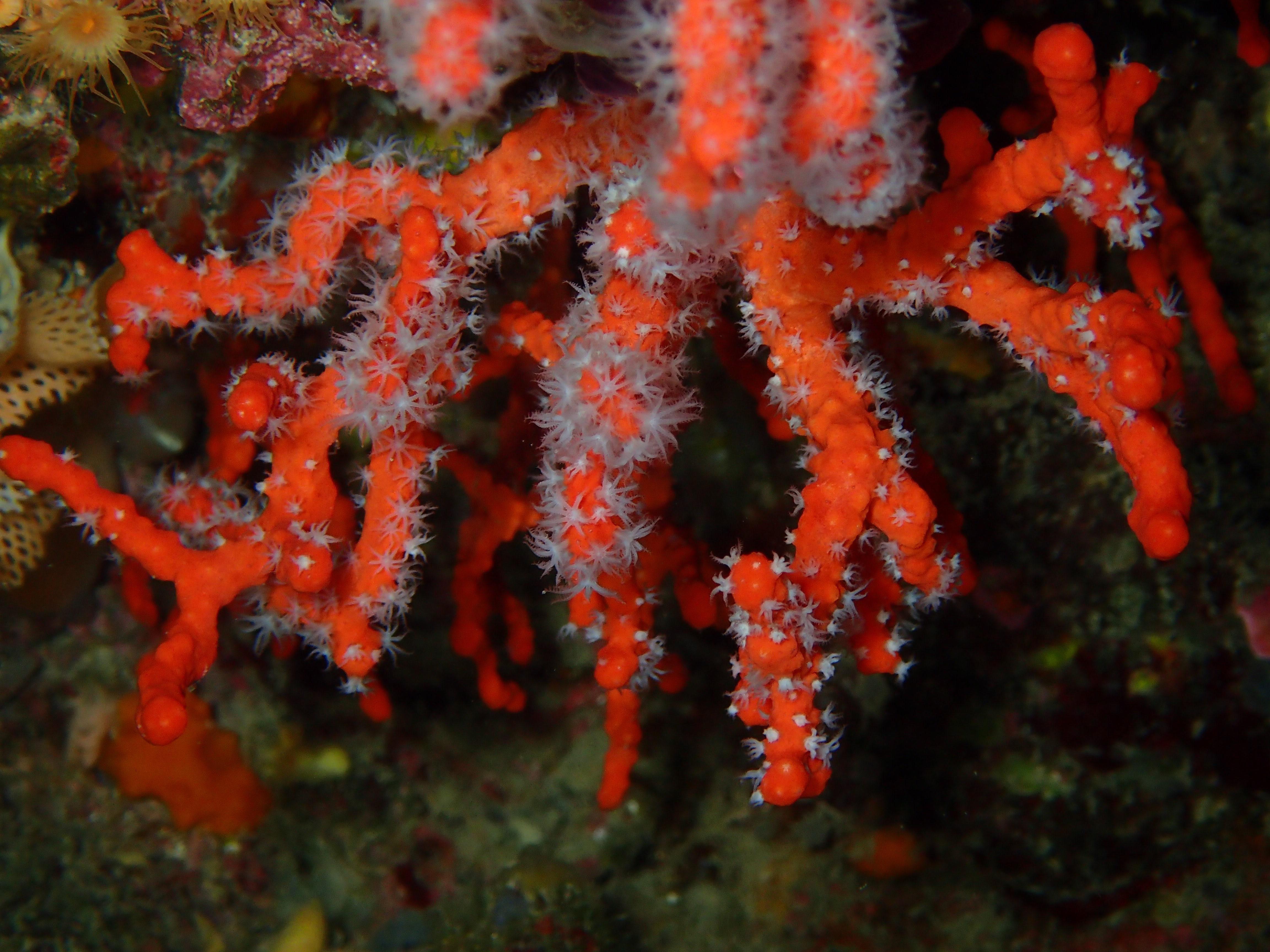}
    \hspace{0.5cm}
    \includegraphics[height=5cm]{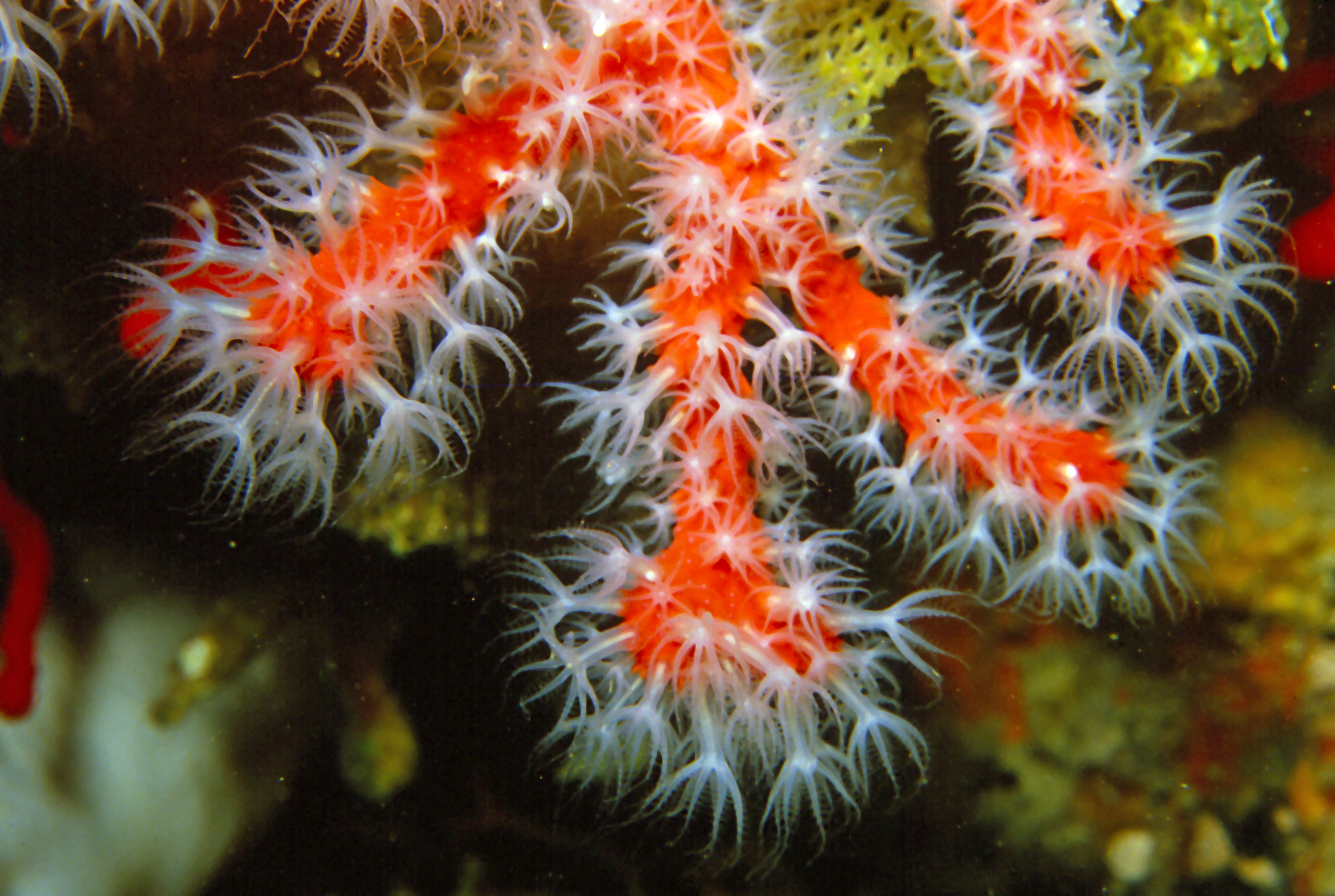}
  \caption{Photographs of red coral colonies. The individual polyps are visible particularly in the right-hand image. 
  Photos from ~\cite{coral1, coral2}.}
  \label{fig:redcoral}
\end{figure}%

\section{Red coral population model}
\label{sec:model}

In this section we present the red coral population model of Bramanti, Iannelli,
and Santangelo~\cite{bramanti:etal:09a,santangelo:etal:07a}, based on their experimental
and field data and a Leslie-Lewis transition matrix. In addition, we describe
the dynamics of the model in terms of its bifurcation structure and discuss
its implications.

\subsection{Description of the model}

A coral population is a self-seeding independent group consisting of
{\em polyps}, tiny soft-bodied organisms related to jellyfish. Polyps
form into {\em colonies}, which are distinct clusters
with polyps residing on a surface, as shown in Figure~\ref{fig:redcoral}.
A polyp is born to a parent colony in a
free-swimming larval stage. At the end of the larval stage, the polyp
permanently attaches itself to a colony and cannot move again. The age
of a colony has implications  in terms of its size and polyp density. As
a result, colony age determines the polyp attachment rate, the larval birth
rate, and the polyp survival rate. Based on these factors, larvae
will attach either to an existing colony or, especially if there
is a high polyp density, {\em recruitment} will occur. That is, 
larvae do not attach to existing colonies, but instead form new colonies. 
Red coral polyps can reproduce larvae starting  two years
after their birth, implying that there is no birth in a colony less than two 
years old, since none of the polyps are old enough to reproduce. 
Reproduction occurs at a discrete time in summer,
implying that a discrete population model is a natural modeling
assumption. 
\begin{figure}
\centering
\begin{minipage}{0.43\textwidth}
\centering
\includegraphics[width=6.6cm]{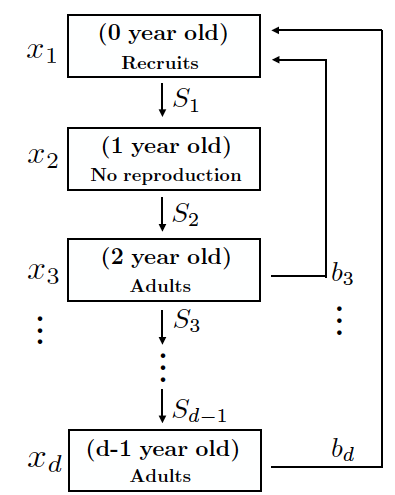}
\caption{Life cycle of coral population}\label{fig:sketch}
\end{minipage}
\hspace*{0.5cm}
\begin{minipage}{0.52\textwidth}
\centering
\captionsetup{type=table} 
\begin{tabular}{| c | c | c | c|}
    \hline
   Class $k$ &  Survival rate $S_k$ & Fertility $F_k$ \\
    \hline
    1 &  0.89 & 0  \\
    2 &  0.63 & 0  \\
    3 &  0.70 & 0.36  \\ 
    4 &  0.52 & 0.64  \\ 
    5 &  0.44 & 0.82 \\
    6 &  0.29 & 0.97 \\
    7 &  0.57 & 0.98 \\
    8 &  0.33 & 0.99  \\
    9 &  0.75 &   1 \\
    10 & 1    &   1  \\
    11 & 0.33 &   1  \\
    12 & 1    &   1  \\
    13 &      &   1  \\
    \hline
  \end{tabular}
\caption{Observational red coral data from~\cite{santangelo:etal:07a}.
  Our calculations are based on their fitting functions given in~(\ref{eqn:pkbk}) 
  and~(\ref{eqn:phi}), which were established using this data.}\label{tab:data}
\end{minipage}
\end{figure}

Based on the setting above, rather than modeling the total large number
of polyps in a coral population, the age-based model is a discrete time
model for $(x_1, x_2, \dots, x_d)$, where~$x_k$ is the number of colonies
of age group~$k$. The value~$d$ is the oldest colony in the population.
While in principle this~$d$ could be large, in the observations made
there was no colony of age group greater than~13. The value of~$x_k$
changes with respect to time (in years), where~$x_k^n$ denotes the
number of colonies of age group~$k$ at year~$n$. The colony life cycle
is displayed in the schematic diagram shown in Figure~\ref{fig:sketch}.
The downward arrows in Figure~\ref{fig:sketch} indicate that~$x_{k+1}$,
the number of colonies in age group~$k+1$, is determined exclusively
by the number of colonies in age group~$k$ in the previous year. This
relation is linear with respect to population, with the survival rate
constant~$S_k$. That is, we have $x_{k+1}^n = S_k x_{k}^{n-1}$. 
The survival rate values are determined by observation, and are  
given in Table~\ref{tab:data}, based on~\cite[Table~2]{santangelo:etal:07a}.

The upward arrows Figure~\ref{fig:sketch} indicate that recruits may be larvae
from any colony of age two or greater. Though it is not obvious from the
schematic diagram, the recruitment rate is not linear, and it depends on
both the total number of polyps in the colonies, as well as on the larvae
birth rates. Considering that the base variables~$x_k$ denote the number
of colonies in age group~$k$, the total number of polyps can be deduced
from the numbers~$p_k$ of polyps per colony in a colony of age group~$k$,
and the birth rates~$b_k$ depend on the fertility rates~$F_k$ given in
Table~\ref{tab:data}. Combined with the observational data
in~\cite{bramanti:etal:09a}, Bramanti et al.\ have then derived empirical
expressions for the polyp per colony numbers~$p_k$ and the birth
rates~$b_k$, which are given by
\begin{equation}\label{eqn:pkbk}
  p_k = 1.239 \, k^{2.324}
  \qquad\mbox{ and }\qquad
  b_k = F_k \, k^{2.324} \; . 
\end{equation}
For our calculations in the present paper, we use these fitting functions
rather than the original data, in keeping with the equations in~\cite{bramanti:etal:09a}. 
In addition to the birth rates, the number of recruits~$x_1$ depends also 
on a nonlinear function~$\phi$, which in turn depends on the density of polyps
per unit area. This function~$\phi$ is given by
\begin{equation}\label{eqn:phi}
  \phi(y) = \dfrac{c_1 e^{-\alpha y}}{y^2 + c_2 e^{-\beta y}}, \; 
  \quad\mbox{ with }\quad
  c_1 = 1.8 \cdot 10^5, \;
  c_2 = 1.3 \cdot 10^7, \;
  \alpha = 5 \cdot 10^{-4}, \;
  \beta = 3.4 \cdot 10^{-3} \, , 
\end{equation}
which again is a fit for the observational data in~\cite{bramanti:etal:09a}. The shape
of this nonlinearity is depicted in Figure~\ref{fig:lsr}. For a small density of
polyps, the function~$\phi$ increases with polyp density, whereas too large of a
polyp density inhibits the creation of new colonies due to competition for
resources.
\begin{figure}[tb] \centering
  \setlength{\unitlength}{1cm}
      \includegraphics[width=10cm]{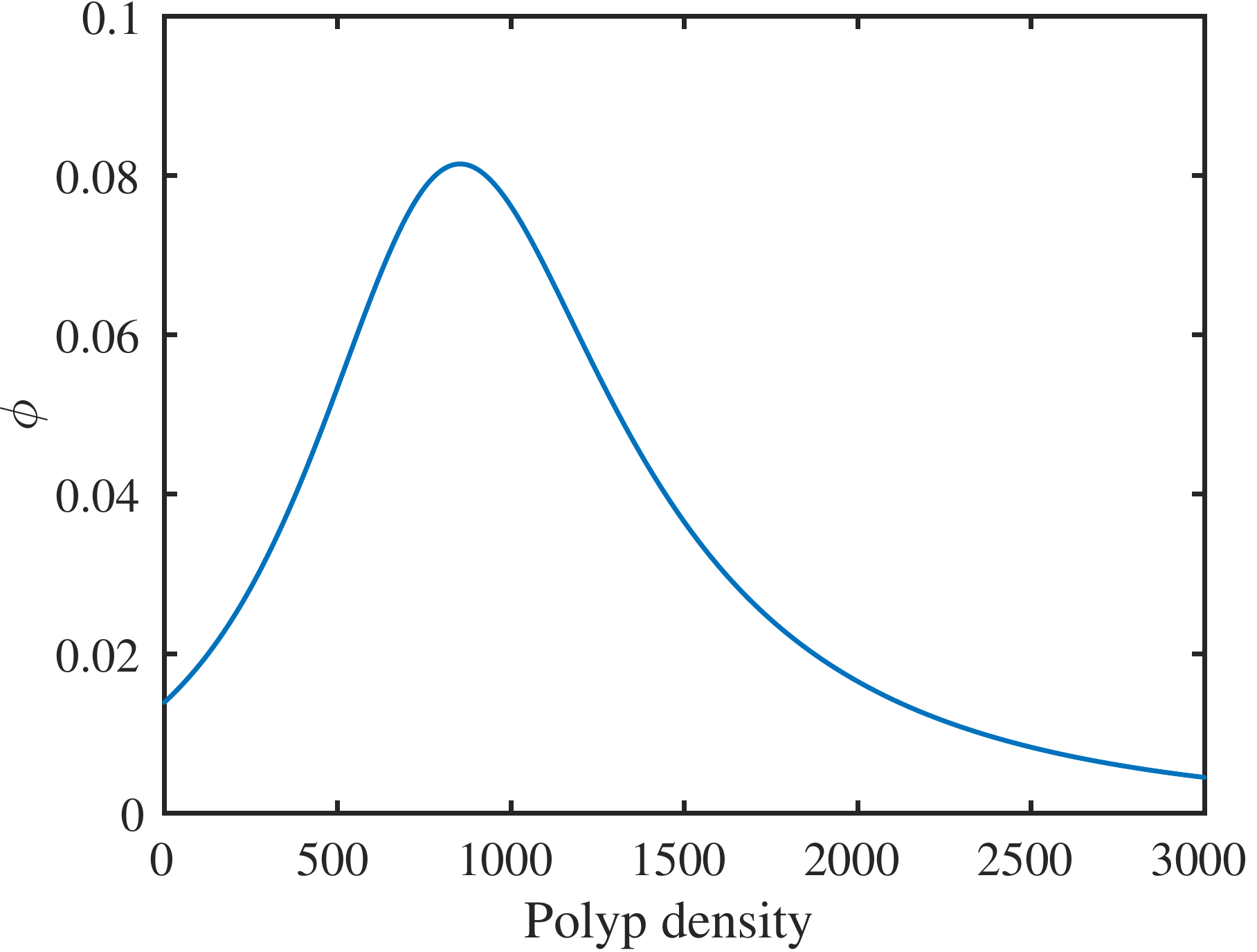}
  \caption{The recruits-to-larvae ratio function~$\phi$ plotted with respect
           to polyp density~$P$.  }
  \label{fig:lsr}
\end{figure}%

We now explain how to compute the polyp population density~$P$. We have already 
seen that the numbers~$p_k$ of polyps per colony in a colony of age group~$k$
satisfy the empirical formulas in~(\ref{eqn:pkbk}). Thus, the total number of
polyps in age group~$k$ is given by~$p_k x_k$. Now let~$\Omega$ denote the total
area of the population site, which was measured to be equal to 36 dm$^2$
in~\cite{bramanti:etal:09a}. Moreover, let $x = (x_1, x_2, \dots, x_d)$ be a column
vector giving the number of colonies of each age group, and let $p = (p_1,p_2,
\dots, p_d)$ denote the vector of polyps per colony in each age group. Then the
total number of polyps in the (non-recruit!) population~$Q$ and the polyp population
density~$P$ satisfy the identities
\begin{equation}\label{eqn:polypdensity}
 Q = \sum_{k=2}^d p_k x_k
 \qquad\mbox{ and }\qquad
 P = \frac{Q}{\Omega} \; . 
\end{equation}
Based on these preliminaries, let $x^n = (x_1^n,x_2^n, \dots, x_d^n)$
represent the vector containing the number of colonies at year~$n$, and
let~$P$ be the polyp population density defined in~(\ref{eqn:polypdensity}). 
If we now define
\begin{equation} \label{eqn:model1}
L(\lambda,x) = 
\begin{bmatrix}
    \lambda b_1 \phi(P) & \lambda b_2 \phi(P) & \lambda b_3 \phi(P) & \dots
      & \lambda b_{d-1} \phi(P) & \lambda b_d \phi(P)\\
    S_1 & 0 & 0 & \dots & 0 & 0 \\
    0 & S_2 & 0 & \dots & 0 & 0 \\
    0 & 0 & S_3 & \dots & 0 & 0 \\
    \vdots & \vdots & \ddots & \vdots & \vdots\\
    0 & 0 & 0 & \dots & S_{d-1} & 0 \\
\end{bmatrix} \; ,
\end{equation}
where the bifurcation parameter~$\lambda$ is described below,
then our model is given by
\begin{equation} \label{eqn:model2}
  x^{n+1} = L(\lambda,x^n) x^n \; . 
\end{equation}
The model~(\ref{eqn:model1}) and~(\ref{eqn:model2}) is an age-structured,
nonlinear, discrete-time dynamical model. For the parameter value $\lambda = 1$,
it is precisely based on the observational data in~\cite{bramanti:etal:09a}.
The nonlinearity arises only in the evolution of the variable~$x_1$, which
describes the number of recruit colonies. In a slight reformatting of notation,
let the function $f: \R \times \R^d \to \R^d$ be given by $f(\lambda,x) = L(\lambda,x) x$.
Then $x^{n+1} = f(\lambda, x^n)$, meaning that the dynamical population variation
corresponds to the iteration of the parameter-dependent nonlinear map~$f$. 

We still have to justify the introduction of the bifurcation parameter~$\lambda$
in the above formulas. Previous work
concentrated on the effect of varying the biologically relevant reproductive
number~$R$, the total number of larvae produced by a single colony during its life span. 
This parameter is directly proportional to $\lambda$, as we will show in
Section~\ref{sec:basicprepro}.  The birth rate parameters~$b_k$ in the above
equation are determined by observation of a specific coral population over a
small time period. In order to consider a population model in which the population
is placed under stress, such as in the case of climate change, it is necessary
to change the parameters beyond what has been observed. While we could also consider
modification of other parameters, we choose to follow along the lines of~\cite{bramanti:etal:09a} 
and vary the birth rates, making the assumption that every birth rate parameter
will be equally affected. Therefore, in our subsequent analysis, for every~$k$
we let the birth rate be given by $\lambda b_k$, a fixed scaling factor compared
to the originally observed birth rate.
\subsection{Fixed points of the coral population model} 
\label{sec:fp}
We now consider the set of fixed points for the coral population model, given
by the nonlinear function~$f$ defined above, and how this set changes as a
function of the parameter~$\lambda$. That is, we wish to determine the set
of all pairs~$(\lambda,x) \in \R \times \R^d$ such that $f(\lambda,x) = x$.
As it turns out, this can be reformulated equivalently as a one-dimensional
problem. To see this, assume that we have $x = f(\lambda,x)$. Then for all
indices $k = 1,\ldots,d-1$ one has $x_{k+1} = S_k x_k$. Using these statements
iteratively, one readily obtains
\begin{displaymath}
    x_2 = S_1  x_1 \; , \quad
    x_3 = S_2 S_1 x_1 \; , \quad
	x_4 = S_3 S_2 S_1 x_1 \; , \quad
    \ldots \quad
    x_d = S_{d-1} \cdots S_2 S_1 x_1 \; . 
\end{displaymath}
Thus, for all $k = 2,\ldots,d$ we have $x_k = a_k x_1$, where one uses
the abbreviation
\begin{equation} \label{def:a}
   a_k = \prod_{i=1}^{k-1} S_i \; ,
\end{equation}
and we further define $a_1 = 1$ then one also has $x_1 = a_1 x_1$.
Since we can write each component~$x_k$ for $k \ge 2$ as a function of~$x_1$
alone, the fixed point problem is a one-dimensional problem, which is only
a matter of determining~$x_1$. Recall that we defined the polyp population
density~$P$ in~(\ref{eqn:polypdensity}), and let $b = (b_1,b_2, \dots, b_d)$. 
Then the equation for~$x_1$ is given by
\begin{displaymath}
  x_1 = \lambda (b \cdot x) \; \phi(P)  \; . 
\end{displaymath}
Moreover, let $a = (a_1, a_2, \dots, a_d)$. This immediately implies
the identities
\begin{displaymath}
  x = x_1 a \; , \qquad
  P = \frac{x_1}{\Omega} \sum_{k=2}^d p_k a_k \; ,
  \qquad\mbox{ and }\qquad
  b \cdot x = (b \cdot a) \; x_1 \; .
\end{displaymath}
Altogether, this shows that a vector~$x = (x_1,\ldots,x_d)$ is a fixed
point for the map~$f(\lambda,\cdot)$ if and only if $x = x_1 a$ and its
first component~$x_1$ satisfies the nonlinear equation
\begin{equation}\label{eqn:x1}
  x_1 = \lambda \, ( b \cdot a) \,  x_1 \,
    \phi \left( \frac{x_1}{\Omega} \sum_{k=2}^d p_k a_k \right) \; .
\end{equation}
From this equation, one can then determine all fixed
points of the coral population model. Notice that we clearly have the
trivial solution~$x = 0$ for all values of the parameter~$\lambda$, 
which corresponds to an extinct population.

\subsection{The basic reproduction number}\label{sec:basicprepro}

An important biological parameter for the coral population is the total
number of larvae produced by a single colony in its entire life span.
This number only depends on the birth and survival rates, and one can
easily see that it is given by
\begin{equation}
  R = \lambda b_1 + \lambda b_2 S_1 + \lambda b_3 S_2 S_1 + \dots +
    \lambda b_d S_{d-1} S_{d-2} \dots S_1 
    = \lambda \sum_{i=1}^d a_i b_i \; .
\end{equation}
The number~$R$ is called the {\em basic reproduction number\/}.
Using the notation from the last subsection, the above equation can be
rewritten as
\begin{equation}
  R = (b \cdot a) \lambda \; .
\end{equation}
In particular, while it is possible to vary~$R$ in such a way that the
relationship between the birth rate constants vary, under our assumptions,
the vectors~$b$ and~$a$ are fixed constant vectors, and we therefore have
a fixed linear relationship between~$R$ and~$\lambda$. To make it easy to
compare our results with those of previous papers, we have chosen to plot
all bifurcation diagrams with respect to the basic reproduction number~$R$.

\subsection{The fixed point bifurcation diagram}
\label{sec:diagram}

We now turn our attention to a description of the bifurcation diagram
of the fixed points for the coral population system. This diagram is
shown in Figure~\ref{fig:bif}, where the set of fixed points is plotted in
terms of the reproductive number~$R$ versus polyp population density~$P$.
The color in the diagram depicts the stability of the fixed points, and
the diagram indicates the existence of three bifurcation points: a
saddle-node and a Neimark-Sacker bifurcation on the nontrivial branch,
which itself bifurcates from the trivial branch at a transcritical
bifurcation. While subsequent sections of this paper will be used to
verify the bifurcation diagram using computer-assisted proofs, the
remainder of the current subsection is devoted to the discussion of
dynamical aspects which are observed through numerical simulations.
\begin{figure}[tb] \centering
  \setlength{\unitlength}{1 cm}
      \includegraphics[height=7cm]{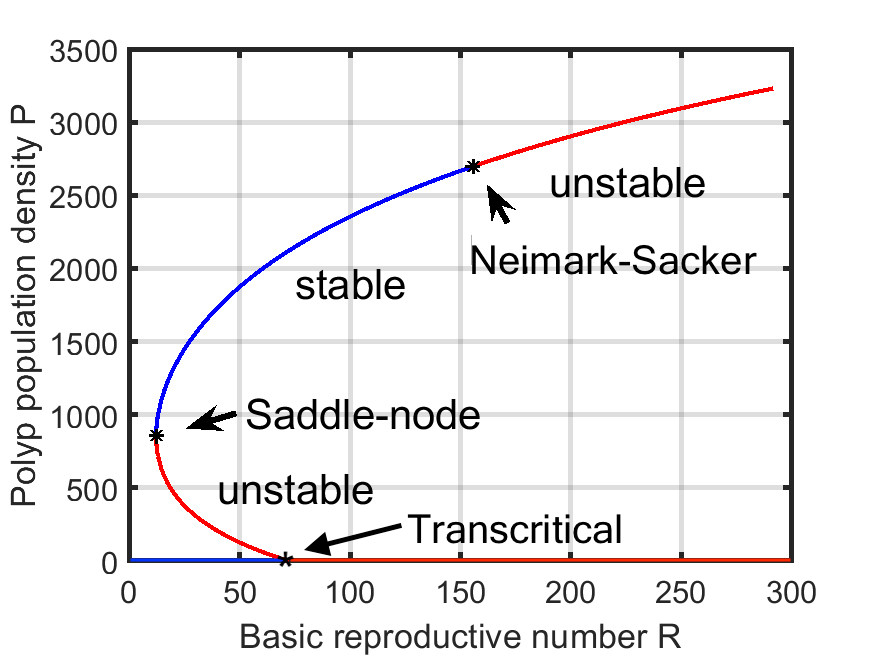}
  \caption{The bifurcation diagram of polyp density~$P$ as a function of
           the reproductive number~$R$. While the diagram covers the range
           $R \in (12,300)$, the birth rate data collected by Bramanti et
           al.\ in~\cite{bramanti:etal:09a} are for $R \approx 29$.}  
  \label{fig:bif}
\end{figure}%

Throughout our computations, we used the case of $d = 13$ age groups.
The bifurcation diagram in
Figure~\ref{fig:bif} was computed using a numerical continuation method
starting at reproduction number~$R=300$, and allowing~$R$ to decrease. There
appears to be a saddle-node point for $R \approx 12.28$ (which corresponds to
$\lambda \approx 0.4213$), after which the basic reproduction number~$R$ of
the fixed points begins to increase again. In Section~\ref{sec:validation} we
use a computer-assisted proof to rigorously validate this saddle-node bifurcation
point. The curve continues further until the population density reaches zero,
which corresponds to an extinct population. We will see later that the extinction
point can be found explicitly, and that it occurs at $R \approx 72.22$ (which
corresponds to $\lambda \approx 2.478$). Moreover, the stability of the trivial
solution $x = 0$ can readily be determined from the Jacobian matrix of~$f$ at
the origin, and this shows that the extinction fixed point is stable for small~$R$,
corresponding to low fitness, and unstable for all larger values of the basic
reproduction number~$R$, with instability index 1. The bifurcation between the 
extinction fixed point being stable and  unstable occurs at the transcritical bifurcation point.  
All of these statements will
be established rigorously in Section~\ref{sec:validation}, including the appearance
of the transcritical bifurcation point. Unlike the other two bifurcation points,
no computer-assisted proofs are necessary along the trivial solution.

As mentioned before, the stability of the fixed points 
$x^* \in \R^{13}$ is indicated by color, with blue indicating stable fixed
points and red representing unstable ones. The local stability at each fixed point in
Figure~\ref{fig:bif} is determined numerically, based on whether all the
eigenvalues of the Jacobian matrix~$D_x f (\lambda,x)$ lie inside the unit
circle or not. In the bifurcation diagram, we have not distinguished the 
index of the stability. If at least one of the eigenvalues lies outside the
complex unit circle, then the fixed point is colored red, meaning
unstable. 

\subsection{Oscillations}\label{subsec:rotation}

Figure~\ref{fig:bif} only shows the existence and stability behavior of fixed
point solutions. But what about the dynamical behavior of the system? In this 
last subsection of Section~\ref{sec:model}, we focus on dynamical aspects of
the model, in particular its oscillatory behavior on attracting invariant circles that 
form as a result of the Neimark-Sacker bifurcation. For a fixed parameter value $R>154.1$
and for a  typical initial condition, solutions converge to these invariant circles,
and therefore the age-structured coral populations oscillate as time varies. 

Figure~\ref{fig:sim} shows the dynamics of initial populations
near fixed points, starting at a variety of different parameters and different
initial aged-structured population vectors $y \in \R^{13}$. At reproduction
number~$R=8.744$ (which corresponds to $\lambda = 0.3$), the solutions converge
to the stable fixed point zero, i.e., the point of extinction. For $R=29.15$
(corresponding to $\lambda=1$), if we start at initial conditions ranging roughly
from~$0.15y$ to~$2y$, where~$y$ is a vector of age-structured initial number of
colonies which was chosen with polyp population density~$P = 1500$, then solutions
converge to a nontrivial stable fixed point. There is also an unstable fixed point
denoted by the red line. In addition, one can observe bistability at this parameter
value. If we start at a smaller value of~$P$, such as for example at initial populations
with polyp population density smaller than~$0.15y$, solutions converge to zero,
i.e., the coral population becomes extinct. At the basic reproduction number
$R=87.4437$ ($\lambda=3$), though it takes longer time than~$100$ years,
the solutions still converge to a stable nontrivial fixed point. In contrast,
at $R=160.31$ ($\lambda=5.5$), population starting at $P = 1.5y$ oscillate.
We used connected lines to show these oscillations more effectively, but recall
that the map is in fact discrete.
%
%
\begin{figure}[tb] \centering
  \setlength{\unitlength}{1 cm}
  \begin{picture}(15.5,10.1)
    \put(0.0,0.0){
      \includegraphics[width=7.25cm]{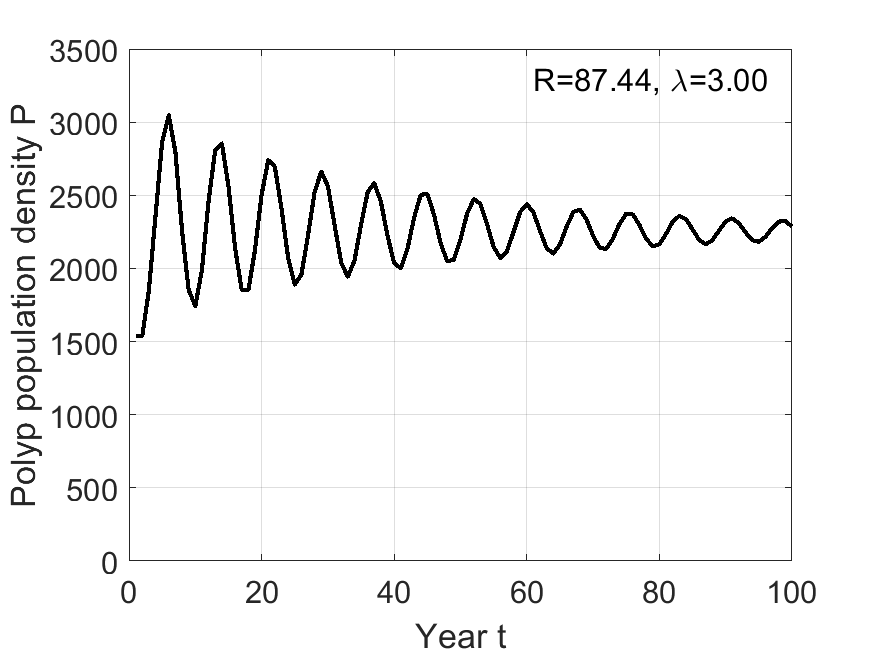}}
    \put(8.25,0.0){
      \includegraphics[width=7.25cm]{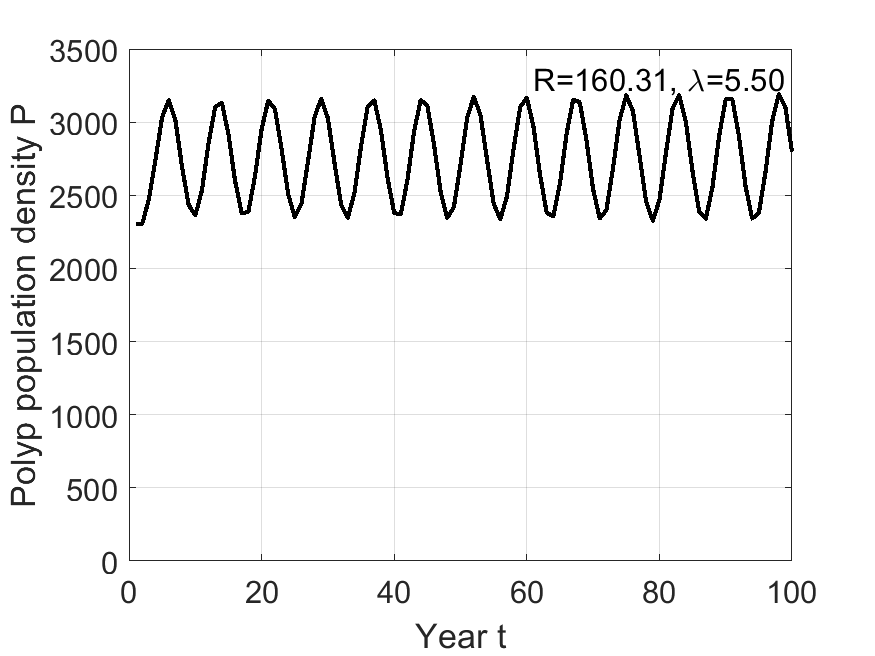}}
    \put(0.0,5.3){
      \includegraphics[width=7.25cm]{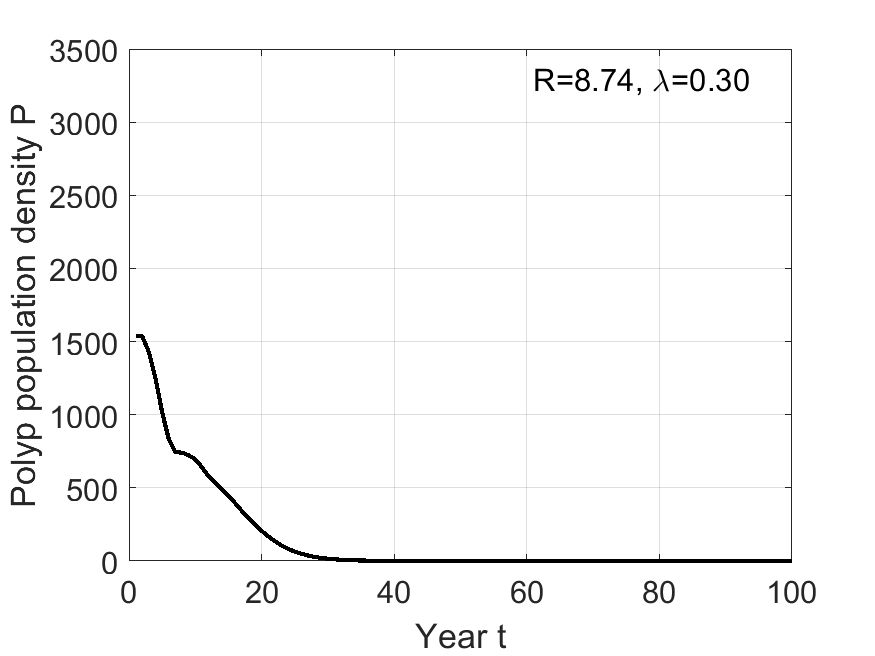}}
    \put(8.25,5.3){
      \includegraphics[width=7.25cm]{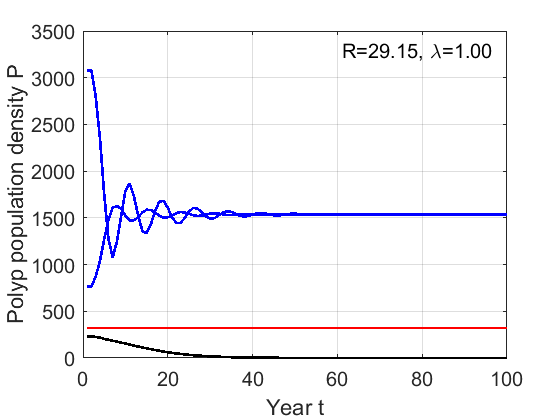}}	  
  \end{picture}
  \caption{Dynamical behavior of some sample orbits of the red coral 
           population model. All of these figures show the temporal evolution 
           of the polyp population density~$P$, and they are simulated over 
           a time frame of 100 years each, at various parameter values.}
  \label{fig:sim}
\end{figure}%
%
\begin{figure}[tb] \centering
  \setlength{\unitlength}{1 cm}
  \begin{picture}(15.5,5.5)
    \put(0.0,0.0){
      \includegraphics[width=7.25cm]{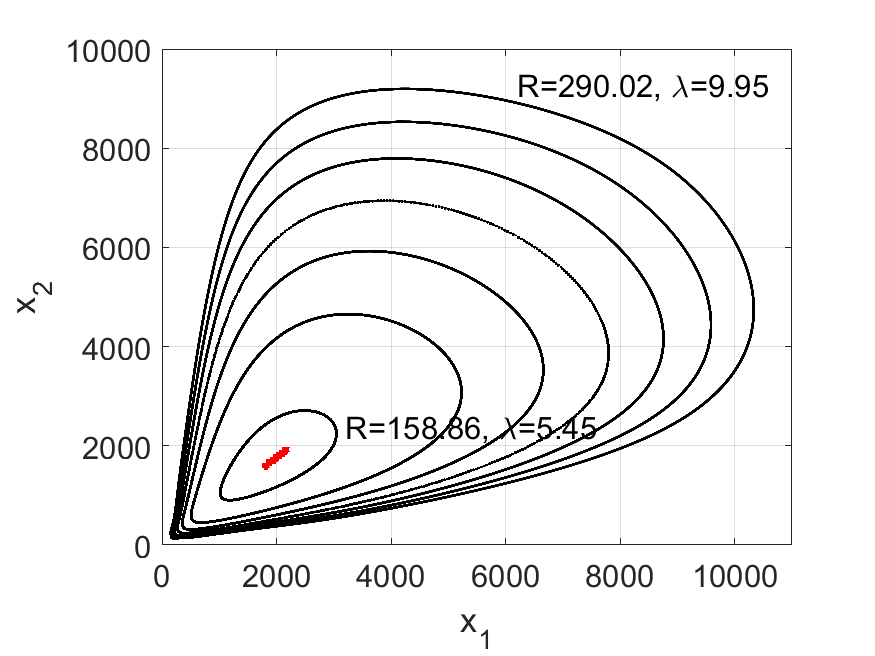}}
    \put(8.25,0.0){
      \includegraphics[width=7.25cm]{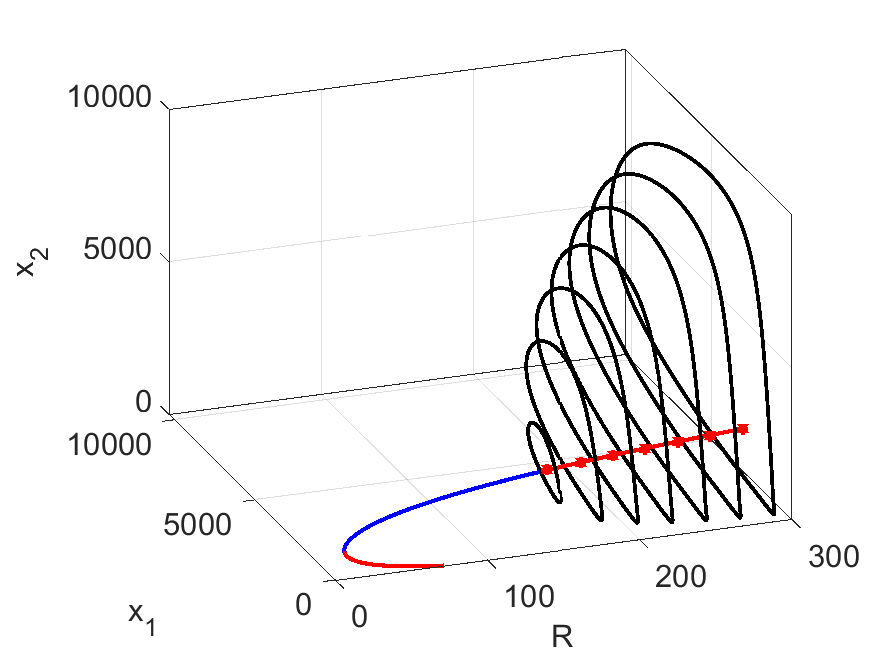}}
  \end{picture}
  \caption{After the Neimark-Sacker bifurcation, oscillating orbits appear. 
  After removing transients in the orbit, the orbit lies on an invariant closed curve. 
  On the left, we plot the~$x_1$- and $x_2$-components of these limit cycles. 
  As the parameter $R$ increases, 
  the size of the closed curve increases. For large values of~$R$, 
  the coral population is close to the extinction point at the origin.
  On the right, the same orbits are shown with respect to $R$, along with the corresponding unstable
  fixed points at the same parameter value.}
  \label{fig:hopf}
\end{figure}%
\begin{figure} \centering
\includegraphics[width=0.45\textwidth]{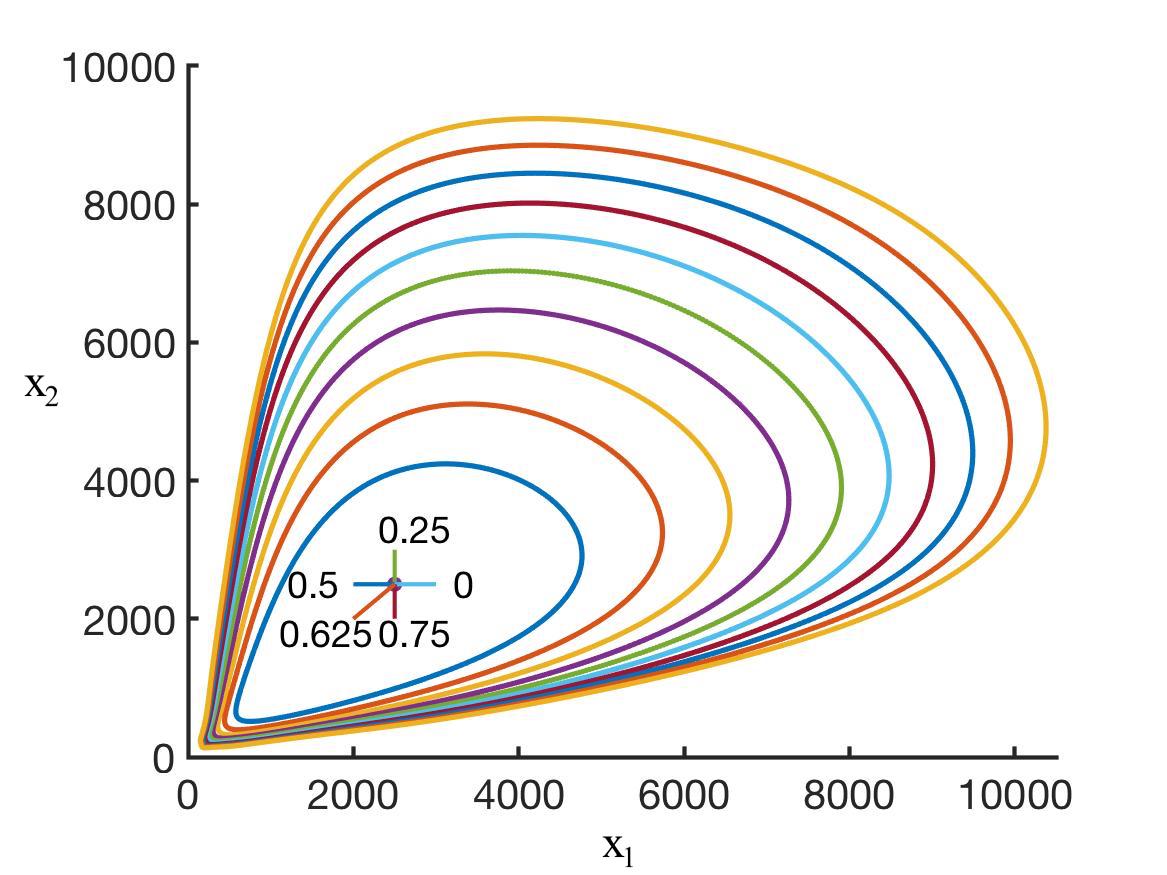}
\includegraphics[width=0.45\textwidth]{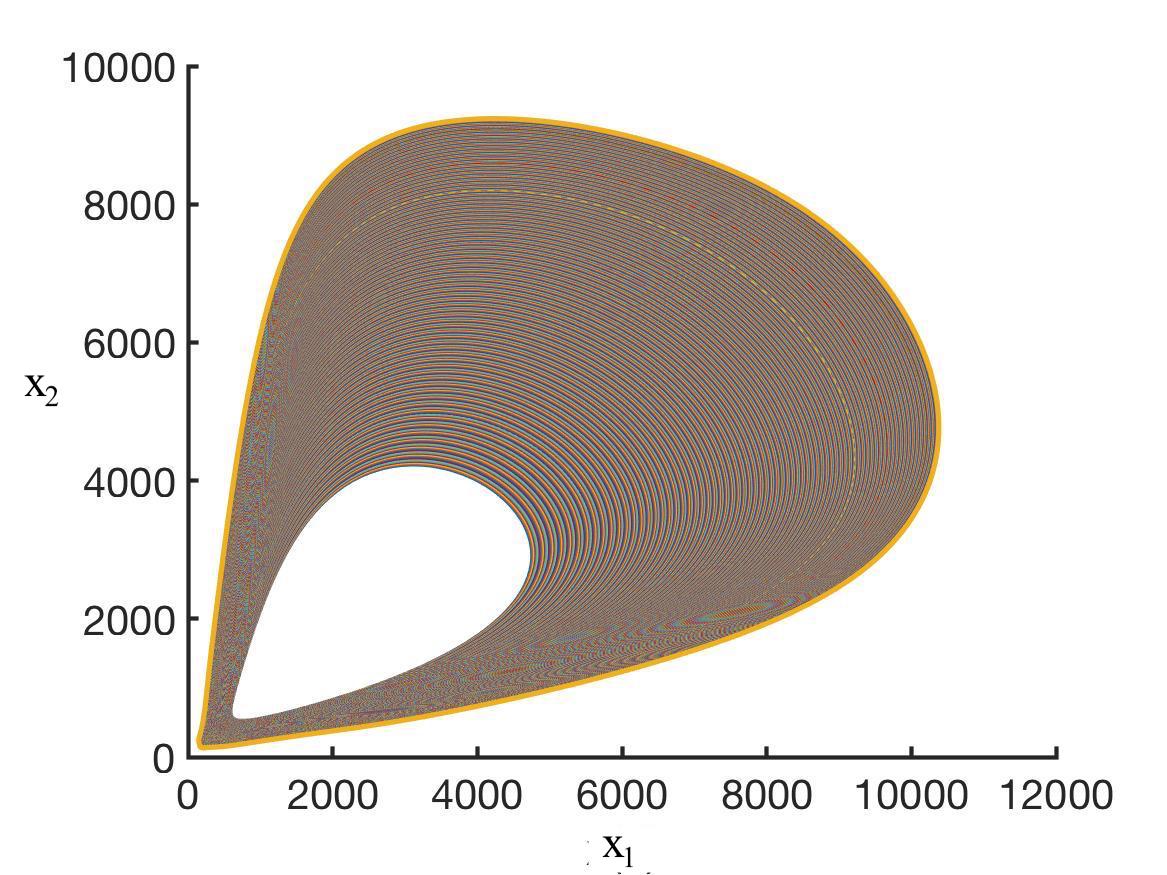}
\caption{\label{fig:cycles}
Top: Invariant cycles for  ten (left) and 500 (right) different parameter values.
 Even though we are guaranteed that some of the cycles contain stable periodic 
orbits, the periods are sufficiently high and the parameter ranges for which they 
exist are sufficiently small that it is hard to see them even in a close zoom (not depicted). 
Each orbit was computed using 100,000 iterates.}
\end{figure}

The oscillations seen in the lower right subplot of Figure~\ref{fig:sim}  
form as a result of  the Neimark-Sacker bifurcation. The fixed point stability
switches from stable to unstable, and an invariant circle gains stability. 
Trajectories with initial conditions near fixed points but after the 
bifurcation are displayed in Figure \ref{fig:hopf}. Perturbations around
an unstable fixed point are repelled from the fixed point 
after the bifurcation, converging to an invariant  closed curve. As the
parameters~$R$ and~$\lambda$ increase, the size of the closed curve also
increases, and the minimum population of a curve approaches the
extinction point at the origin. That is, red coral populations become
vulnerable at a large reproduction number, and a very small perturbation
of the population would endanger the survival of the population despite the
existing long recovery cycle.
\begin{figure}\centering
\includegraphics[width=0.32\textwidth]{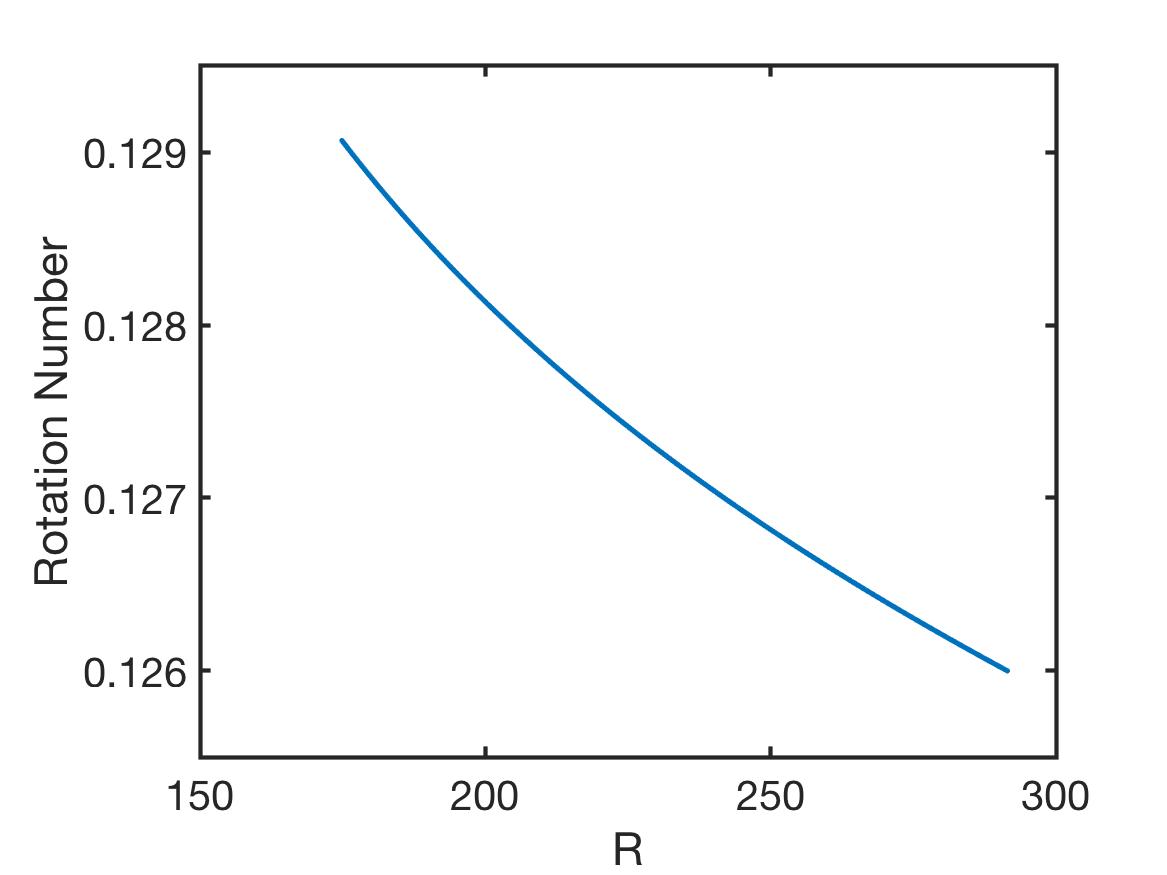}
\includegraphics[width=0.32\textwidth]{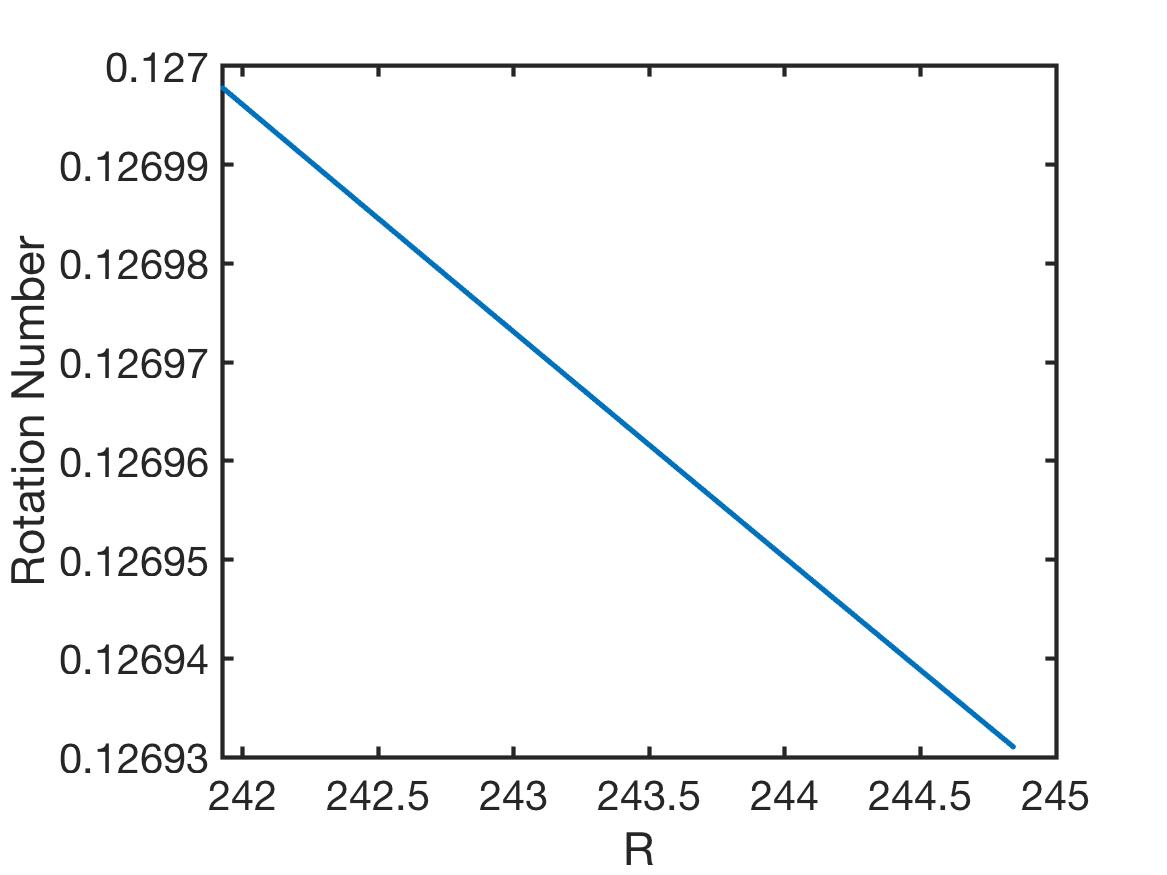}
\includegraphics[width=0.32\textwidth]{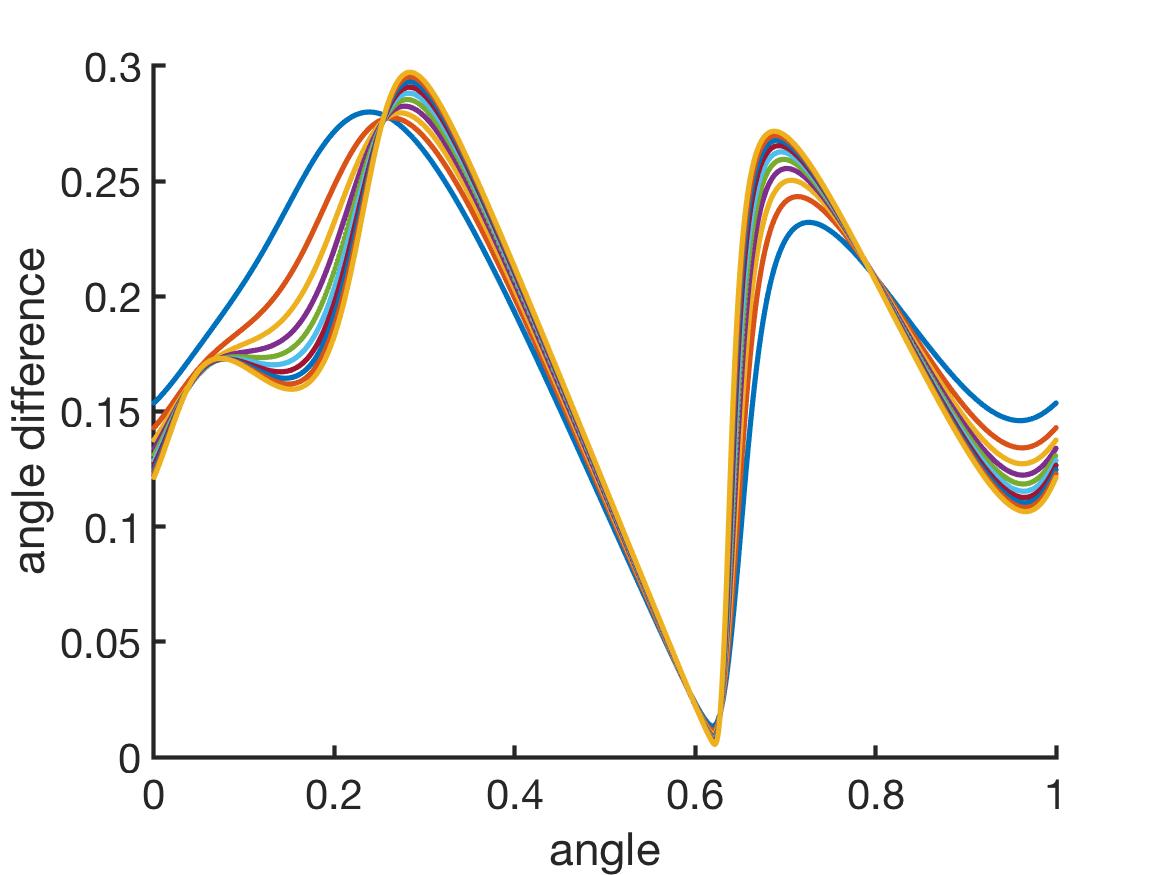}
\caption{ \label{fig:rotnum}
  The rotation number for the cycles shown in Figure~\ref{fig:cycles} (left)
  and a close up view of the rotation numbers (middle), this time with one
  million iterates. The periodic orbits are of such high periods that 
  we cannot detect the devil's staircase type behavior of the rotation number
  within the Arnold tongue locking regions. The rotation number is computed
  using the angle difference between successive values of~$(x_1,x_2)$, computed
  with respect to the point $(x_1,x_2)=(2500,2500)$, and the angle versus angle
  difference is depicted here (right) for the cycles for ten different~$R$ values.
  The minimum occurs at the angle pointing towards the extinction point.} 
\end{figure}

In order to better understand the stable invariant limit cycles that form after
bifurcation, we have computed the rotation number, meaning the average angle of
rotation per iterate, as a function of the parameter~$R$. Specifically, we used
the projection to the $x_1  x_2$-plane to compute the rotation numbers. Our
computations are performed using the weighted Birkhoff average method described
in~\cite{das:etal:16a}. Figure~\ref{fig:cycles} shows cycles at a ten distinct parameter
values on the left, and for 500 distinct parameters on the right. The corresponding
rotation numbers are shown in Figure~\ref{fig:rotnum}. The values are angles, but
they are rescaled to have values in the range $(0,1)$. Each rotation number was
computed by considering the angle difference between successive iterates when
measured with respect to the point $(2500,2500)$. To verify our numerics and
check that we have used a sufficient number of iterates in our calculation,
we compared the rotation number computed with 50,000 iterates to the rotation
number computed with 40,000 for a series of test parameters. In these test
parameters, the answer differs by $10^{-15}$ or less. 

Note that we would expect to see a devil's staircase in the rotation 
numbers at the parameter values when there are periodic orbits, 
but what we see looks smooth even when quite zoomed in. This is due to the fact that 
the periodic orbits are extremely high period. In particular, we are able to use a Farey 
tree calculation to find the smallest denominator, corresponding to the lowest period, of a periodic orbit 
for the case of a rational rotation number for this range of rotation numbers, using the 
method in~\cite{beslin:etal:98a,sander:meiss:20a}. In particular, we find 
that the lowest denominator in the range $[0.126,0.129]$ is 39 (fraction $5/39$). 
See the zoomed in look at the cycles in the bottom
two images in Figure~\ref{fig:cycles}. Not only is the lowest possible period quite large and 
therefore hard to distinguish from a limit cycle, but also the large periodicity
implies that the Arnold tongue locking regions are 
very small parameter ranges, meaning that we are not able to resolve them without more delicate computations. 

The average rotation number gives only the mean of how much the population is changing 
with respect to time. This leaves out some information as to how the change in population 
depends on the location of the population. In the right subplot in Figure~\ref{fig:rotnum}, 
we show the angle difference as a function of the angle for ten different values of~$R$. 
That is, for each point in the invariant circle, 
we graph how much the population is changing in one iterate (corresponding to one 
year) at each point in the invariant circle.  The 
smallest angle difference, corresponding to the slowest change, occurs for angle $\approx 0.625$,
corresponding to the values closest to the origin extinction point. Therefore, 
a portion of the invariant circles is getting dangerously close to the origin, such that a 
small perturbation could result in the extinction of the whole coral population. To compound
matters further, the orbits are staying near 
the extinction point for longer than they remain in any other region, since
at these points the observed angle differences are very close to zero. Thus the population 
remains extremely vulnerable for a particularly long time.

\section{Branch validation and continuation}\label{sec:arclength}

We now turn to the rigorous validation of fixed points, both for regular and
bifurcation values. Our general approach is the constructive implicit function
theorem from~\cite{sander:wanner:16a}. This is a rigorous result that combines
with a numerical interval arithmetic calculation to give rise to a validated
method for finding a branch in the zero set of a function which depends on a
single parameter. In the following four subsections, we will first recall the
constructive implicit function theorem, and then define an extended system
which can be used for pseudo-arclength continuation. After that, we prove
two results which form the basis of our approach, and describe the necessary
preconditioning for the coral population model application.

\subsection{The constructive implicit function theorem}

Before stating the full result, here is a summary. 
Given an approximate zero $(\alpha^*,x^*)$ of a function $G(\alpha,x)$ 
where $x$ is contained in a Banach space and $\alpha \in \R$,  under certain
hypotheses on $G$ and its derivatives evaluated at the approximate zero $(\alpha^*,x^*)$, 
combined with Lipschitz estimates near this point,
there exist two regions in parameter and phase space. First, the {\em accuracy
region\/}, which contains a curve of the zero set.  Second, a {\em uniqueness
region\/}, in which that zero set curve is unique. 
See the schematic in Figure~\ref{fig:uniqueacc}. The blue dot 
shows the initial approximate zero. The orange curve is the 
zero set curve, which is guaranteed to lie within the accuracy region (the blue region). 
Note that the approximate zero does not in general lie on the zero set. The 
accuracy region is contained within the 
uniqueness region, shown in orange. 
The uniqueness region is largest in phase space when the 
parameter is closest $\alpha^*$. As the parameter varies, the 
uniqueness region shrinks (meaning we have worse isolation). 
The constructive implicit function theorem guarantees that the 
uniqueness region is characterized by a linear norm condition, as 
depicted by the straight sides in the schematic diagram. The accuracy region 
has best (i.e., smallest)  
accuracy when the parameter is near the parameter of the original point~$\alpha^*$.  
The accuracy region grows (meaning we have worse accuracy) with a quadratic norm condition. 
This is depicted schematically by its parabolic shape. 
We now state the formal theorem. 
\begin{figure}[tb] \centering
      \includegraphics[width=0.4\textwidth]{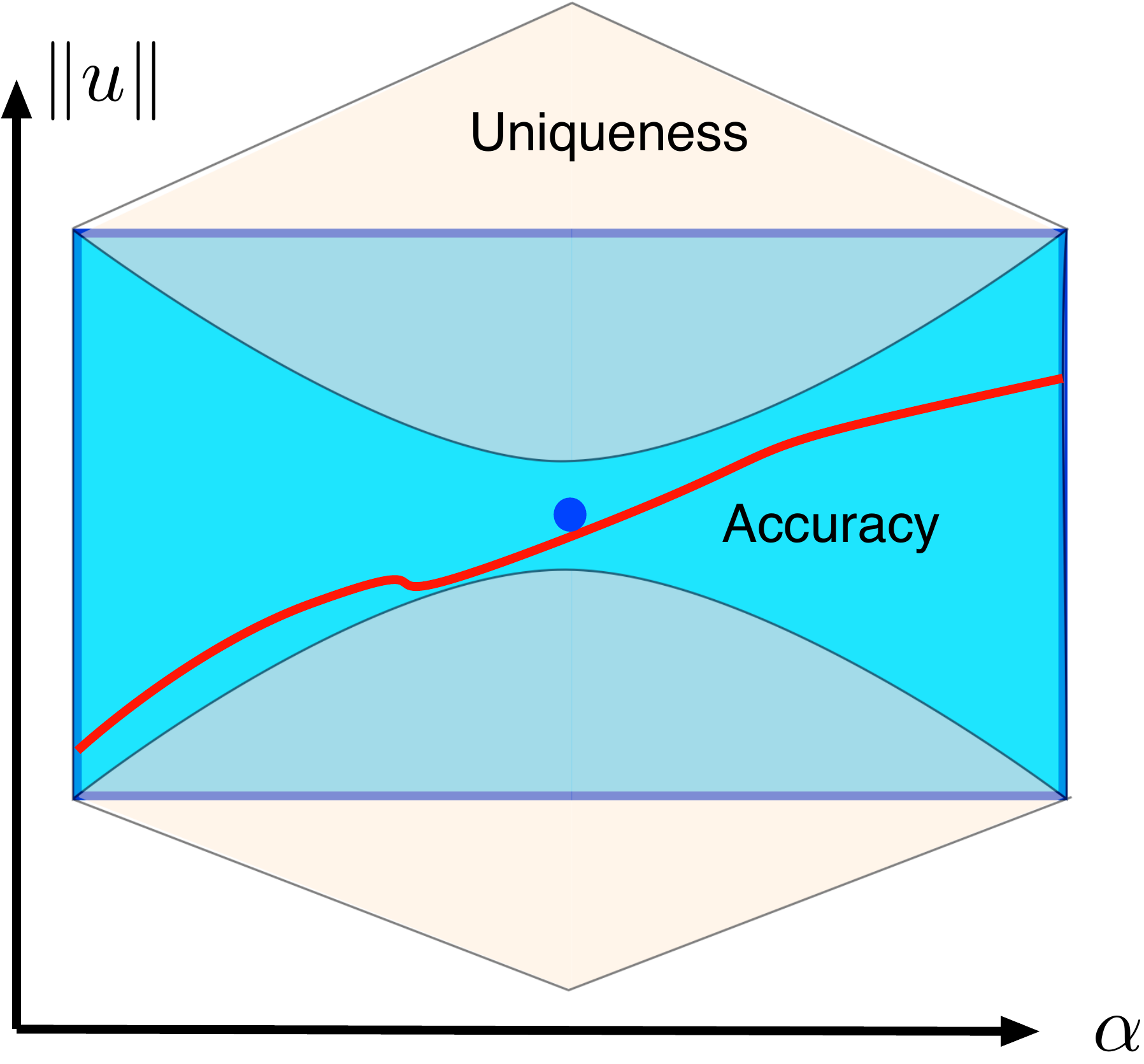}
  \caption{A schematic depiction of the constructive implicit function theorem. 
  The theorem  guarantees that under 
  appropriate hypothesis, an approximate zero (blue dot) guarantees that within a uniqueness 
  region (orange region) there is a curve in the zero set with a unique point at 
  each fixed $\alpha$ value (red curve), and the 
  this curve is located within an accuracy region (blue region). The uniqueness
  region contains the accuracy region. It is bounded in norm by straight lines, 
  and the accuracy region is bounded in norm by parabolas.}
  \label{fig:uniqueacc}
\end{figure}%

\begin{theorem}[Constructive Implicit Function Theorem]
\label{nift:thm}
Let~$\cP$, $\cX$, and~$\cY$ be Banach spaces, suppose that the
nonlinear operator $G : \cP \times \cX \to \cY$
is Fr\'echet differentiable, and assume the following
hypotheses.
\begin{itemize}
\item[(H1)] Small residual: There exists a pair
$(\alpha^*,x^*) \in \cP \times \cX$ and a $\rho>0$ such that
\begin{displaymath}
  \left\| G(\alpha^*,x^*) \right\|_\cY \le \rho \; .
\end{displaymath}
\item[(H2)] Bounded derivative inverse: There exists a constant~$K > 0$
such that
\begin{displaymath}
  \left\| D_x G(\alpha^*,x^*)^{-1} \right\|_{\cL(\cY,\cX)} \le K
  \; ,
\end{displaymath}
where~$\| \cdot \|_{\cL(\cY,\cX)}$ denotes the operator norm
in~$\cL(\cY,\cX)$. 
\item[(H3)] Lipschitz bound:  There exist
positive real constants~$L_1$, $L_2$, $\ell_x$,
and~$\ell_\alpha \ge 0$ such that for all pairs $(\alpha,x)
\in \cP \times \cX$ with $\| x - x^* \|_\cX \le \ell_x$ and
$\|\alpha - \alpha^*\|_\cP \le \ell_\alpha$ we have
\begin{displaymath}
  \left\| D_x G(\alpha,x) -
    D_x G(\alpha^*,x^*) \right\|_{\cL(\cX,\cY)} \le
    L_1 \left\| x - x^* \right\|_\cX +
    L_2 \left\|\alpha - \alpha^* \right\|_\cP \; .
\end{displaymath}
\item[(H4)] Lipschitz-type bound: There exist positive
real constants~$L_3$ and~$L_4$, such that for all parameters $\alpha \in \cP$
with $\|\alpha - \alpha^*\|_\cP \le \ell_\alpha$ one has
\begin{displaymath}
  \left\| D_\alpha G(\alpha,x^*) \right\|_{\cL(\cP,\cY)} \le
    L_3 + L_4 \left\| \alpha - \alpha^* \right\|_\cP \; ,
\end{displaymath}
where~$\ell_\alpha$ is the constant that was chosen in~(H3).
\end{itemize}
Finally, suppose that
\begin{equation} \label{nift:thm1}
  4 K^2 \rho L_1 < 1
  \qquad\mbox{ and }\qquad
  2 K \rho < \ell_x \; .
\end{equation}
Then there exist pairs of constants~$(\delta_\alpha,\delta_x)$ with
$0 \le \delta_\alpha \le \ell_\alpha$ and $0 < \delta_x \le \ell_x$,
as well as
\begin{equation} \label{nift:thm2}
  2 K L_1 \delta_x + 2 K L_2 \delta_\alpha \le 1
  \qquad\mbox{ and }\qquad
  2 K \rho + 2 K L_3 \delta_\alpha + 2 K L_4 \delta_\alpha^2
    \le \delta_x  \; ,
\end{equation}
and for each such pair the following holds. For every~$\alpha \in \cP$
with $\|\alpha - \alpha^*\|_\cP \le \delta_\alpha$ there exists a uniquely
determined element~$x(\alpha) \in \cX$ with $\| x(\alpha) - x^* \|_\cX
\le \delta_x$ such that $\cG(\alpha, x(\alpha)) = 0$.
In other words, if we define
\begin{displaymath}
  \cB_\delta^\cX = \left\{ \xi \in \cX \; : \;
    \left\| \xi - x^* \right\|_\cX \le \delta \right\}
  \quad\mbox{ and }\quad
  \cB_\delta^\cP = \left\{ p \in \cP \; : \;
    \left\| p - \alpha^* \right\|_\cP \le \delta \right\}
  \; ,
\end{displaymath}
then all points of the solution set of the equation $G(\alpha,x)=0$ in the
set $\cB_{\delta_\alpha}^\cP \times \cB_{\delta_x}^\cX$ lie on the graph
of the function $\alpha \mapsto x(\alpha)$.  
\end{theorem}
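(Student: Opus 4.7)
The plan is to apply the Banach fixed point theorem to the Newton-like operator
\begin{displaymath}
T_\alpha(x) \;=\; x - A\, G(\alpha, x),
\qquad\text{where}\qquad A = D_xG(\alpha^*, x^*)^{-1},
\end{displaymath}
which by (H2) is a bounded operator with $\|A\|_{\cL(\cY,\cX)} \le K$. Fixed points of $T_\alpha$ in the closed ball $\cB_{\delta_x}^\cX$ are exactly zeros of $G(\alpha,\cdot)$ there, so the whole task reduces to exhibiting constants $(\delta_\alpha, \delta_x)$ such that, for every $\alpha \in \cB_{\delta_\alpha}^\cP$, the operator $T_\alpha$ is a self-map of $\cB_{\delta_x}^\cX$ and a strict contraction on it.

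For the self-map property I would start from the identity
\begin{displaymath}
T_\alpha(x) - x^* \;=\; -A\, G(\alpha, x^*) \;-\; A\!\int_0^1 \bigl[D_xG(\alpha, x^*+t(x-x^*)) - D_xG(\alpha^*, x^*)\bigr](x-x^*)\,dt,
\end{displaymath}
and combine (H1) and (H4), via the fundamental theorem of calculus in $\alpha$, to obtain $\|G(\alpha, x^*)\|_\cY \le \rho + L_3\delta_\alpha + (L_4/2)\delta_\alpha^2$; the Lipschitz hypothesis (H3) then controls the integrand. This would yield
\begin{displaymath}
\|T_\alpha(x) - x^*\|_\cX \;\le\; K\rho + K L_3 \delta_\alpha + \tfrac{KL_4}{2}\delta_\alpha^2 + \tfrac{K L_1}{2}\delta_x^2 + KL_2 \delta_\alpha \delta_x.
\end{displaymath}
For the contraction property, the analogous integral representation of $T_\alpha(x_1) - T_\alpha(x_2)$ together with (H3) gives
\begin{displaymath}
\|T_\alpha(x_1) - T_\alpha(x_2)\|_\cX \;\le\; K\bigl(L_1 \delta_x + L_2 \delta_\alpha\bigr) \|x_1 - x_2\|_\cX,
\end{displaymath}
and the first inequality in (\ref{nift:thm2}) immediately bounds the contraction constant by $1/2$.

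The hard part, requiring the most care, is matching the two inequalities in (\ref{nift:thm2}) so that the self-map bound closes exactly at $\delta_x$. The second inequality directly absorbs the parameter-only terms $K\rho + KL_3\delta_\alpha + KL_4\delta_\alpha^2$ into $\delta_x/2$. The cross term $(KL_1/2)\delta_x^2 + KL_2\delta_\alpha\delta_x$ needs a mildly non-obvious rearrangement: from the first inequality one has $KL_1\delta_x + 2KL_2\delta_\alpha \le 2KL_1\delta_x + 2KL_2\delta_\alpha \le 1$, which upon multiplication by $\delta_x/2$ yields $(KL_1/2)\delta_x^2 + KL_2\delta_\alpha\delta_x \le \delta_x/2$. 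Added together the two halves give $\delta_x$, closing the argument.

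Finally I would verify that admissible pairs $(\delta_\alpha, \delta_x)$ actually exist. Setting $\delta_\alpha = 0$ collapses (\ref{nift:thm2}) to $2KL_1\delta_x \le 1$ and $2K\rho \le \delta_x \le \ell_x$, which is simultaneously solvable precisely because hypothesis (\ref{nift:thm1}) gives the strict inequalities $2K\rho < 1/(2KL_1)$ and $2K\rho < \ell_x$. By continuity of the defining inequalities, a nontrivial range of $\delta_\alpha > 0$ remains admissible (the lower bound for $\delta_x$ grows parabolically in $\delta_\alpha$ while the upper bound decays linearly, and they meet at a positive value of $\delta_\alpha$). For any such admissible pair, applying the Banach fixed point theorem to $T_\alpha$ on the complete metric space $\cB_{\delta_x}^\cX$ produces for each $\alpha \in \cB_{\delta_\alpha}^\cP$ a unique $x(\alpha) \in \cB_{\delta_x}^\cX$ with $G(\alpha, x(\alpha)) = 0$, and uniqueness in the ball is exactly the graph statement of the theorem.
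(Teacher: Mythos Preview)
Your argument is correct: the Newton-like operator $T_\alpha(x) = x - D_xG(\alpha^*,x^*)^{-1} G(\alpha,x)$ together with the Banach fixed point theorem is exactly the right mechanism, and your bookkeeping with the two halves of~$\delta_x$ closes the self-map estimate cleanly. Note, however, that the paper does \emph{not} prove Theorem~\ref{nift:thm}; it is quoted verbatim from the authors' earlier work~\cite{sander:wanner:16a} and used as a black box, so there is no in-paper proof to compare against. Your proof is the standard Newton--Kantorovich argument that underlies that reference, so in substance you have reconstructed the intended proof.
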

\begin{figure}[tb] \centering
  \setlength{\unitlength}{1 cm}
      \includegraphics[width=7.cm]{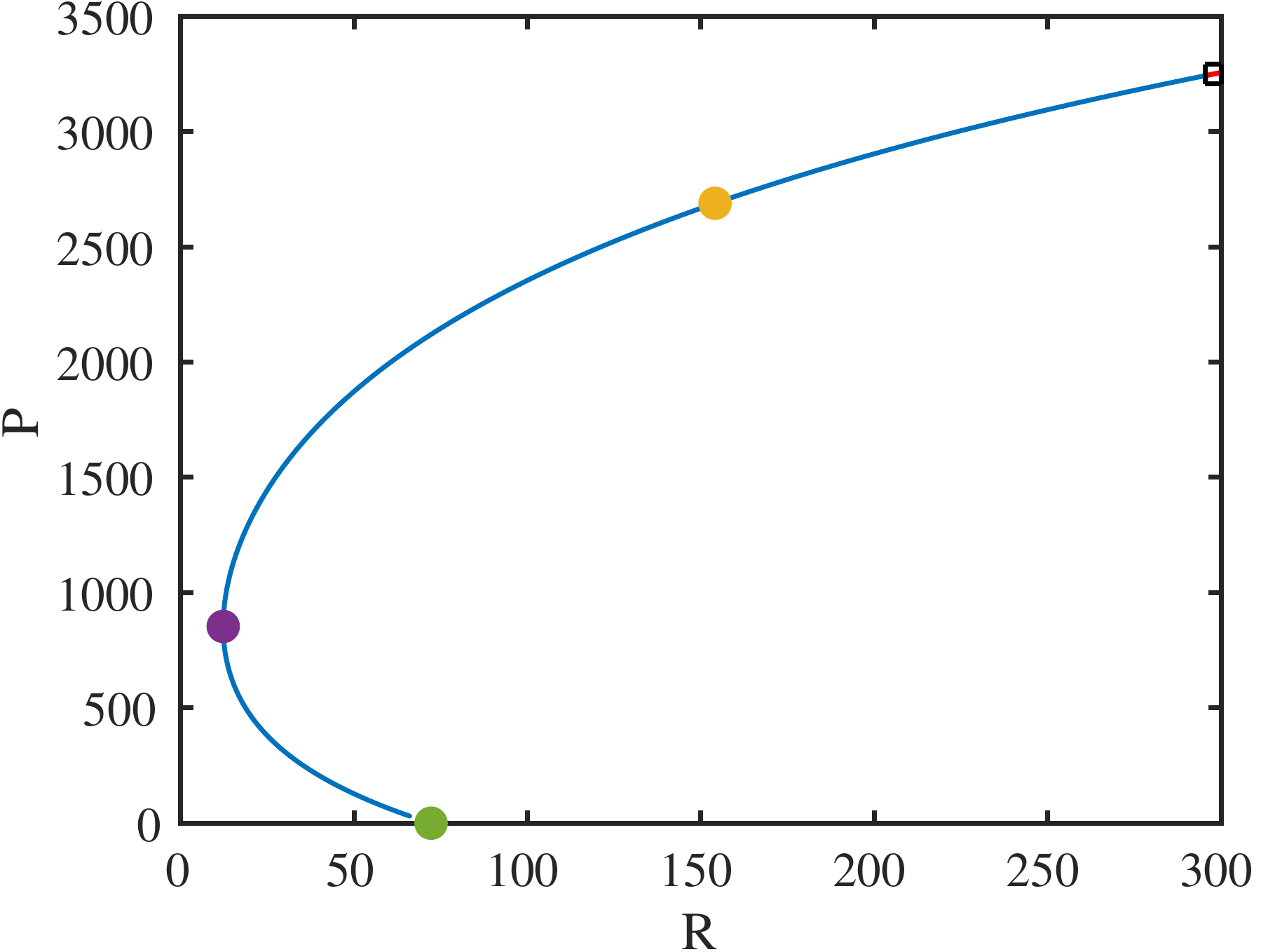}
       \includegraphics[width=7cm]{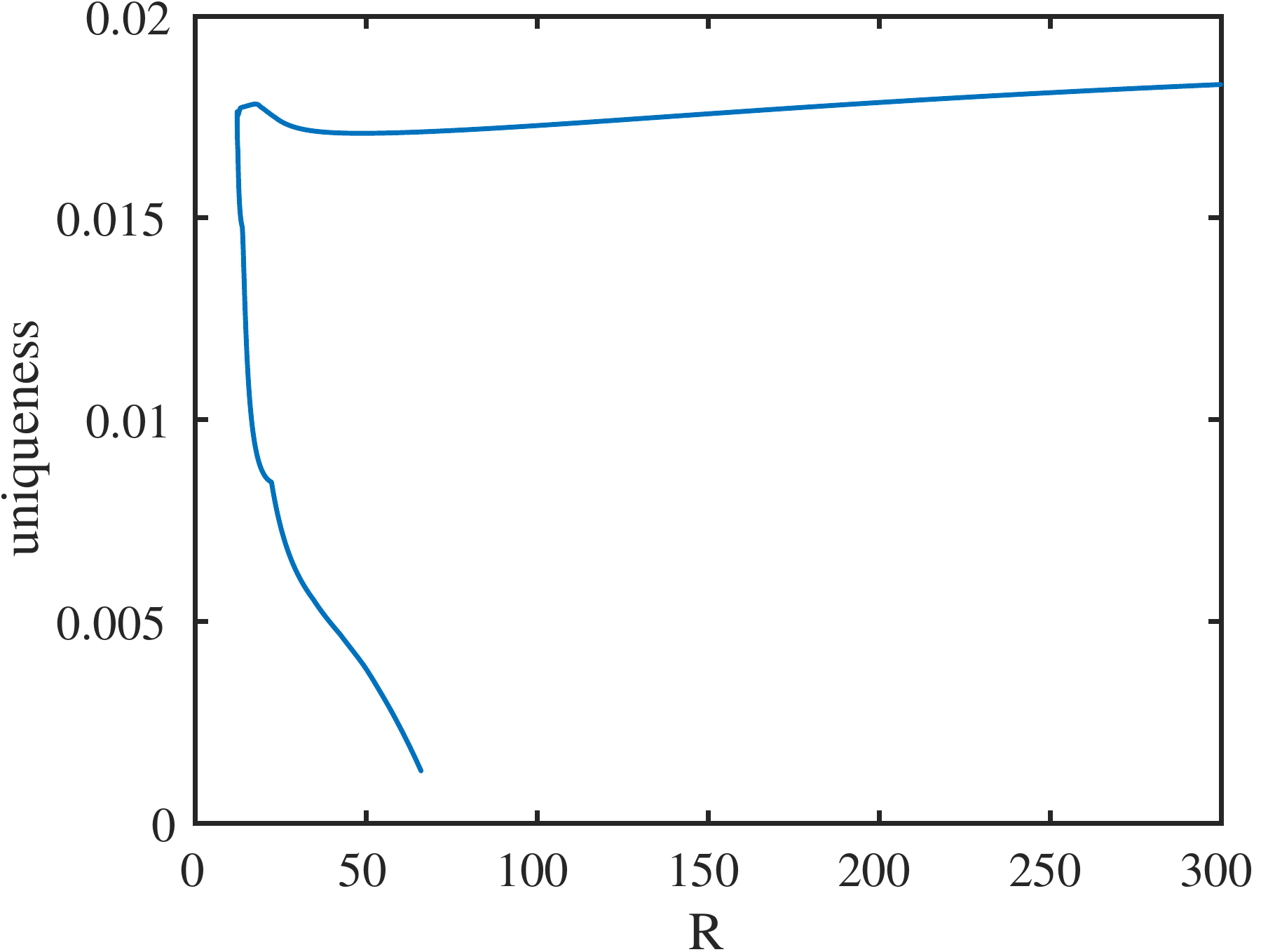}
  \caption{Left, the validated bifurcation diagram of polyp density~$P$ as a
  function of the  reproductive number~$R$, 
  along with the three validated bifurcation points. 
  The blue curve consists of 5000 continuation steps, corresponding to 5000 linked boxes, 
  for the preconditioned map with $\alpha = 0.8 \; \delta_\alpha$. 
  The initial validated box contains $(R,P) = (300, 3256)$, which is in the upper right
  corner of the bifurcation diagram, and the last validated box contains $(R,P) = (71.91, 1.493)$,
  which is close to the green transcritical bifurcation point.
  For comparison purposes, 4000 continuation steps for the unconditioned map are 
  shown in red within the extremely 
  small square region in the upper right-hand corner. 
  Right, the norm of the uniqueness region of the solution. As the 
  solution gets near the transcritical bifurcation at the origin, the uniqueness region gets smaller. 
  This is expected,  since there is no longer any uniqueness 
  when the two branches of the solution curve meet.
  }  
  \label{fig:comp}
\end{figure}%
In its classical form, the implicit function theorem is one of the central
tools of bifurcation theory. Not only can it be used to establish the existence
of small solution branches in nonlinear parameter-dependent equations, but by
applying it as a tool to modified problems it can frequently be used to provide 
sufficient conditions for bifurcations. For example, the celebrated Crandall-Rabinowitz
result~\cite{crandall:rabinowitz:71a} on bifurcation from a simple eigenvalue proves
the existence of a bifurcating branch by applying the implicit function theorem
to a modification of the original nonlinear problem which removes the trivial
solution. The constructive implicit function theorem can
similarly be used as a tool for bifurcation analysis, yet in a computer-assisted proof
setting. In fact, some first applications in this direction have already been provided
in~\cite{lessard:sander:wanner:17a, sander:wanner:16a}. With the current paper, we add
two more applications.

More precisely, in the following we will be applying Theorem~\ref{nift:thm} in two
different situations. In the remainder of this section, we apply it for branches of
regular points. Through the introduction of a suitable extended system we can reformulate
a validated step of pseudo-arclength continuation as an application of the constructive
implicit function theorem to this extended system. Combined with suitable linking
conditions, this establishes the existence of entire branches covered by slanted boxes.

In addition, in Section~\ref{sec:validation} we use Theorem~\ref{nift:thm} to validate 
bifurcation points. In that setting, and motivated by our earlier work~\cite{lessard:sander:wanner:17a},
we will apply the theorem to an extended system without any parameter, as the parameter
will be incorporated into the function for which we find a root. This parameter-free
case means that we no longer need to find the Lipschitz constants relevant to the
parameter variations, and we set these unused constants equal to zero. 

\subsection{Continuation and an extended system}

To elaborate further on the validation of regular fixed points, the constructive implicit function
theorem as stated in~\cite{sander:wanner:16a}
only applies to a single region, validated at a single point.
The same paper contains a version of this theorem for slanted boxes, 
using natural continuation in order to validate a branch of solutions by linking their validation sets to validate a
larger portion of the branch. However, 
natural continuation leaves something to be desired in terms of efficiency. 
In this section, we develop a method of validation of bifurcation branches using 
pseudo-arclength continuation which allows for the direct application 
of the constructive implicit function theorem, and apart from Lipschitz estimates, 
only requires estimates at a single point  in each box. 
This method is an improvement on the previous natural continuation 
method in that we can continue at limit points without having to change coordinates. 
The methods in this section apply for regular orbits along branches. In the next section, we will 
show how to adapt the constructive implicit function theorem in order to rigorously validate bifurcation points. 

Before launching into further technicalities, we describe our results. Applying the pseudo-arclength continuation method to a preconditioned version of the
coral model (preconditioning is discussed in Section~\ref{sec:preconditioning} below), the resulting rigorously validated curve of fixed points is shown in Figure~\ref{fig:comp}. 
While Figure~\ref{fig:bif} shows a similar picture, the distinction is that those points were 
found using numerical methods, and though we have a priori error estimates for these methods, 
we cannot guarantee existence or accuracy. In contrast,  the points shown on the new figure are rigorously validated. 
The depicted points are an accurate indication of existing fixed points of the system, with known
and validated accuracy and uniqueness region. In particular, the accuracy of our solutions 
is known individually for each separate box, and is always less than $1.453 \cdot 10^{-13}$,  where the error in 
$x \in \R^{13}$ is measured in the maximum norm. Figure~\ref{fig:comp} shows the norm of the uniqueness  
for each separate box. The uniqueness shrinks when the 
curve approaches zero. This is not surprising, since $x=0$ is part of the zero set, 
putting a barrier on the size of the uniqueness region.

We now proceed with the constructive
implicit function theorem for a validated pseudo-arclength continuation.
In each continuation step we  use continuation in a box with slanted
sides, where the predictor step is performed along the middle of the box
in the direction a specified vector $(\mu,v)$ (usually the estimated tangent
to the zero set curve), and the corrector step uses a computation such as 
Newton's method to refine the estimate. This refinement is 
performed in a direction orthogonal to the
predictor direction $(\mu,v)$. This is depicted in Figure~\ref{fig:ac1}. The
left-hand image is a schematic diagram showing the box with its midline
between two blue dots. The midline is the estimated tangent line in the direction 
$(\mu,v)$. Our validation gives us a maximum length of the box for which we 
can guarantee accuracy and uniqueness of the solution. 
The predictor,   shown with a red dot, must be chosen inside that box. 
The corrector,  shown with a
green dot is along an orthogonal line to the midline. 
The right-hand image shows the accuracy region in blue and the
uniqueness region in orange. Note that the uniqueness region has large
width near the starting point, and the accuracy region grows towards the
ending point. In Figure~\ref{fig:ac1}, the uniqueness region for the box is approximately diamond 
shaped, whereas in Figure~\ref{fig:ac2}, the box is not only slanted but also has a uniqueness region 
which is asymmetric, more of a half-diamond. The half-diamond shape is in fact only half of 
the uniqueness box. In particular, as we are merely continuing in
one direction, which in Figure~\ref{fig:ac2} is to the left, we only show one side of the 
uniqueness box. The fact that we could continue to the right as well is not relevant for 
our continuation. 
\begin{figure}[tb] \centering
      \includegraphics[width=0.4\textwidth]{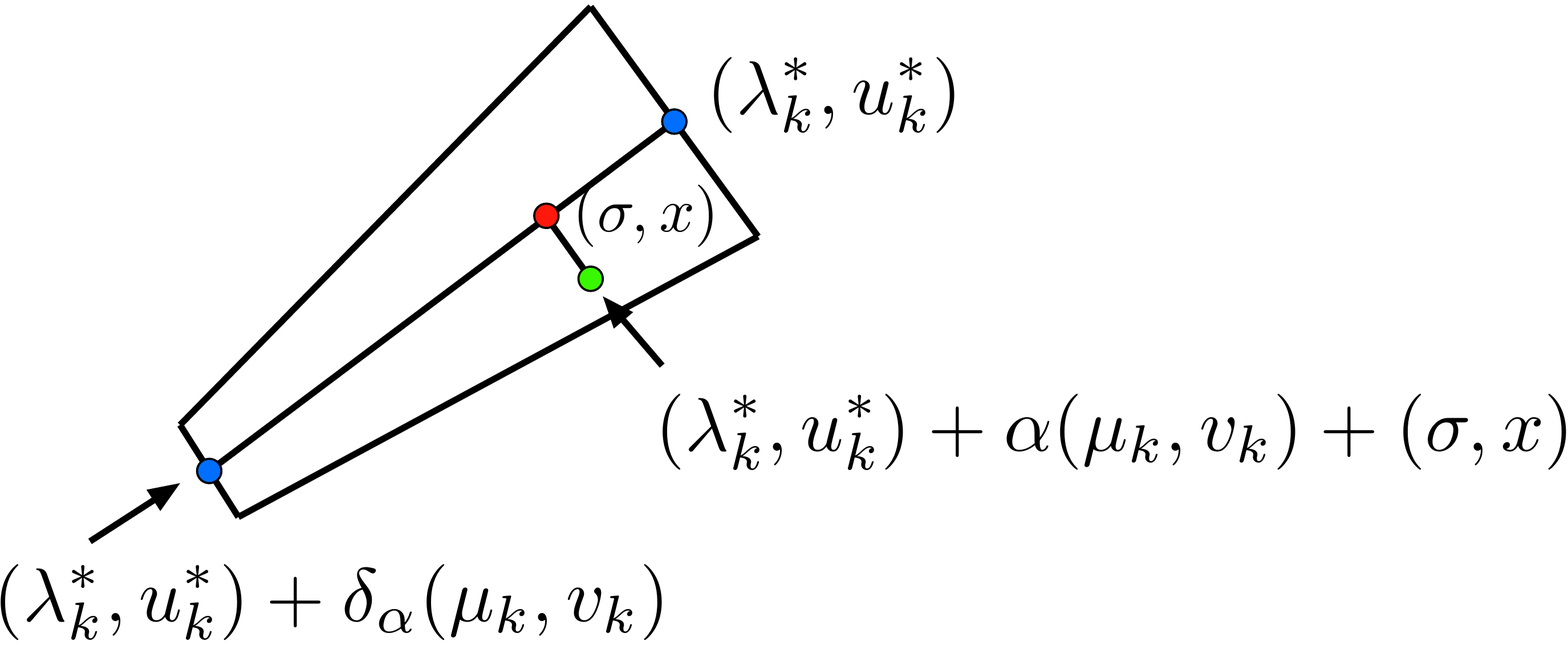}
      \includegraphics[width=0.3\textwidth]{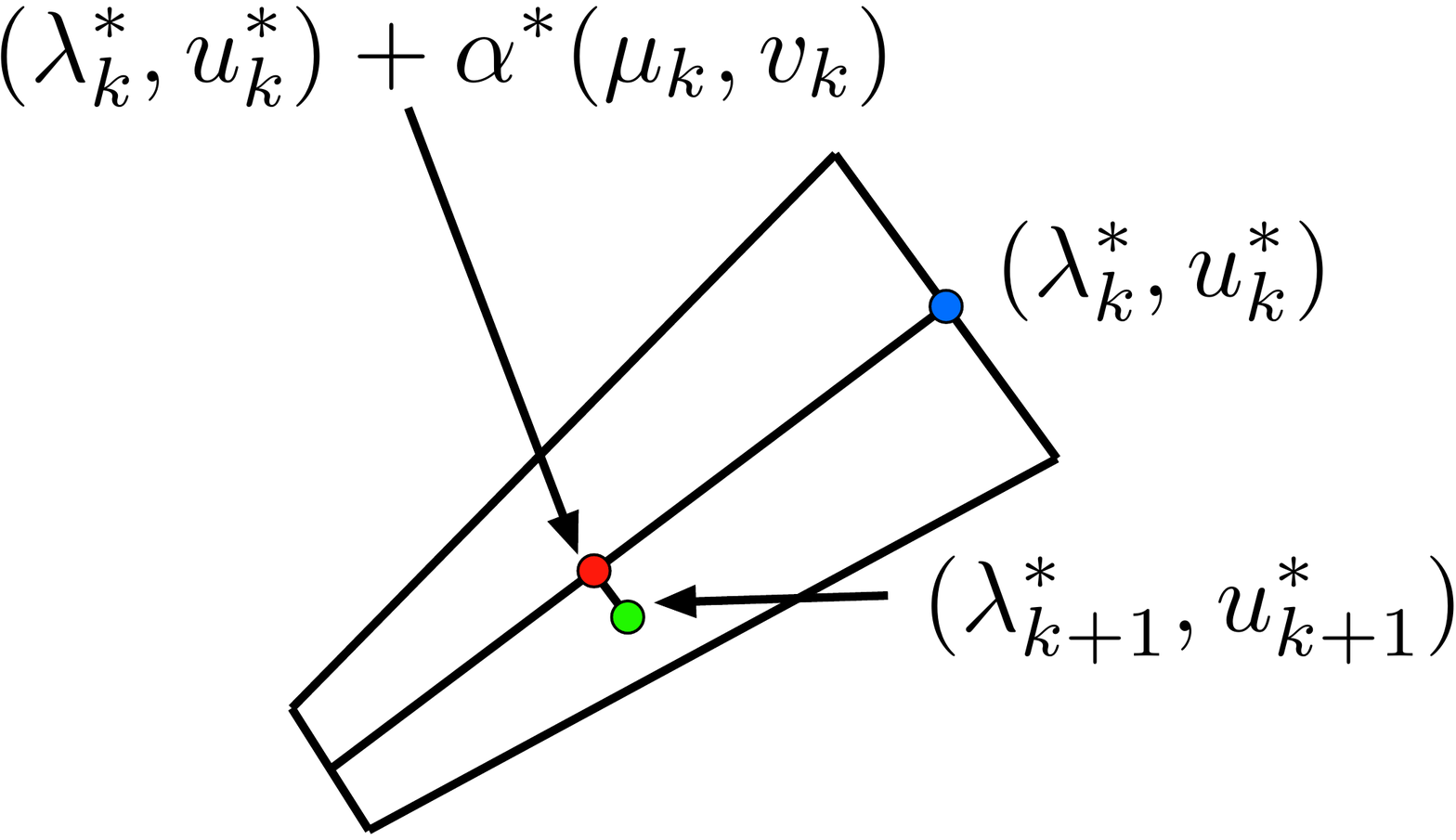}
      \includegraphics[width=0.25\textwidth]{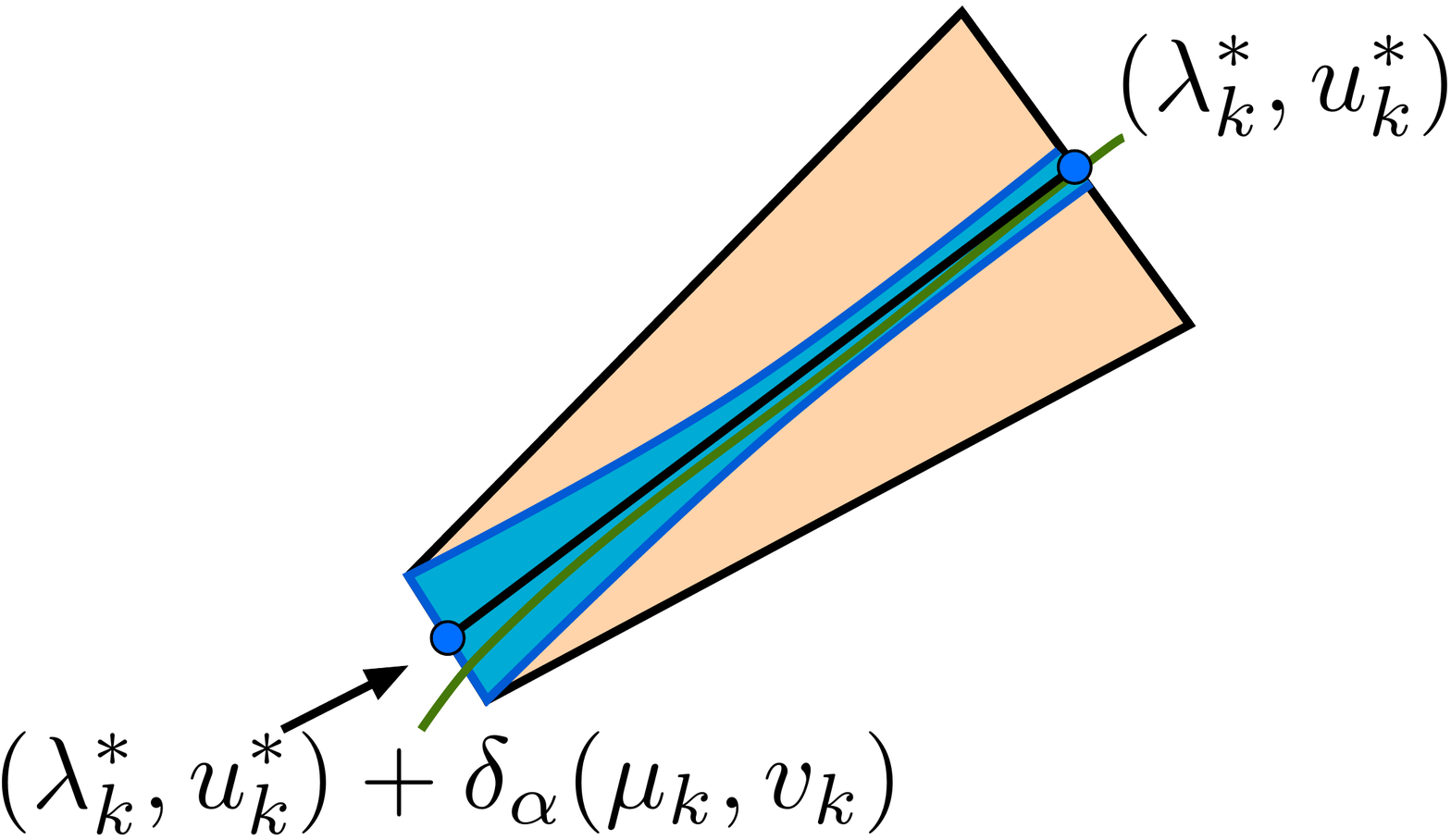}
  \caption{A schematic diagram of the the pseudo-arclength continuation method.
  Left image: The result guarantees a uniqueness region for the zero set. 
  This takes place in an adapted coordinate system, 
  meaning that the box is slanted, but the uniqueness region is 
  still bounded by straight lines.
  Since we only continue the curve in one direction, 
  this figure only depicts the left half of the uniqueness region. 
   The center line segment of this region is given by $(\lambda_k^*,u_k^*) + \alpha (\mu_k,v_k)$
   for $0 \le \alpha \le \delta_\alpha$. 
   At a fixed $\alpha$ value, we use Newton's method to find the next approximate zero
   along the line $(\lambda_k^*,u_k^*) + \alpha (\mu_k,v_k) + (\sigma,x)$, where~$(\sigma,x)$ 
   denotes the vector pointing from $(\lambda_k^*,u_k^*) + \alpha (\mu_k,v_k)$
   to the point~$(\lambda_k^*,u_k^*) + \alpha (\mu_k,v_k) + (\sigma,x)$, and which
   is orthogonal to~$(\mu_k,v_k)$.
   Middle image:  After we fixed the value $\alpha=\alpha^*$, 
   we label this next approximation $(\lambda_{k+1}^*,u_{k+1}^*)$.
   Right image: Inside the uniqueness region (orange) is an  accuracy region
  (blue). The  accuracy region is bounded by curves which are parabolic in norm in the 
  adapted coordinate system.
   }
  \label{fig:ac1}
\end{figure}%

We now turn to the technical details of this approach. For this, let
$F: \R \times U \to U$, where~$U$ denotes an arbitrary Euclidean space.
Our goal is to implement pseudo-arclength continuation based on Theorem~\ref{nift:thm}
to find branches of zeros of the nonlinear function~$F$. For the specific application
of this paper, we will consider $U = \R^{13}$ and $F(\lambda,x) = f(\lambda,x) - x$, 
where $f$ is the coral model. Nevertheless, we use the more general notation based on~$F$
to indicate that these methods are general. In fact, the methods readily generalize to
the Banach space setting as well. However, in this paper for
convenience of notation we only consider the Euclidean space case. For any
$(\lambda_0,u_0) \in \R \times U$, an approximate zero of~$F$, and 
for a fixed direction vector $(\mu_0,v_0) \in \R \times U$, define 
$G: \R \times (\R \times U) \to \R \times U$ as follows
\begin{equation} \label{defextG}
  G(\alpha, (\sigma,x)) = 
  \left(
  \begin{array}{c}
    \mu_0 \sigma + v_0^t x \\[1ex]
    F (\lambda_0 + \alpha \mu_0 + \sigma, u_0 + \alpha v_0 + x)
  \end{array}
  \right) \; .
\end{equation}
The zeros of~$G$ as the parameter~$\alpha$ varies correspond to the
pseudo-arclength continuation solutions of~$F$ for a single continuation
box. The first component of the function~$G$ guarantees that the pair~$(\sigma,x)$ is 
orthogonal to the direction~$(\mu_0,v_0)$. As we will show in the next subsection,
one can apply the constructive implicit function theorem from~\cite{sander:wanner:16a}
directly to the extended function~$G$ and thereby perform rigorously validated
pseudo-arclength continuation.

Since we will need them later, we close this subsection by explicitly
stating the derivatives of~$G$ with respect to both the variables~$(\sigma,x)$
and with respect to the parameter~$\alpha$. These are respectively given by
\begin{eqnarray}
  D_{(\sigma,x)}G(\alpha,(\sigma,x))  & = & \label{defextG1} \\[1ex]
  & & \hspace*{-2.5cm}
  \left( \begin{array}{cc}
    \mu_0 & v_0^t \\[1ex]
    D_\lambda F (\lambda_0 + \alpha \mu_0 + \sigma, u_0 + \alpha v_0 + x) &
    D_u F (\lambda_0 + \alpha \mu_0 + \sigma, u_0 + \alpha v_0 + x)
  \end{array} \right) \; , \nonumber
\end{eqnarray}
as well as
\begin{eqnarray}
  D_{\alpha}G(\alpha,(\sigma,x))  & = & \label{defextG2} \\[1ex]
  & & \hspace*{-2.5cm}
  \left( \begin{array}{c}
    0 \\[1ex]
    D_\lambda F(\lambda_0 + \alpha \mu_0 + \sigma, u_0 + \alpha v_0 + x)
    \mu_0 +
    D_u F(\lambda_0 + \alpha \mu_0 + \sigma, u_0 + \alpha v_0 + x) v_0
  \end{array} \right) \; . \nonumber
\end{eqnarray}

\subsection{Pseudo-arclength validation theorem}

We are now in a position to start establishing assumptions under which we can validate
a branch in the zero set of $F$ using pseudo-arclength continuation. For this we need 
the following modified set of assumptions. For the purposes of this paper, we use the vector norm~$\| (\alpha,x) \| = 
\max\{ |\alpha|, \| x \|_U \}$ for all $(\alpha,x) \in \R \times U$, even
though this could easily be modified.

\begin{itemize}
\item[(P1)] We assume both
\begin{equation} \label{eqn:xi}
  \| F(\lambda_0,u_0)\|_U \le \rho
  \quad\mbox{ and }\quad
  \| D_\lambda F (\lambda_0,u_0)\mu_0 + D_uF(\lambda_0,u_0) v_0 \|_U \le \xi
  \; .
\end{equation}
\item[(P2)] Assume that there exists an explicit constant $K>0$ which is a bound
on the operator norm of the inverse of the matrix
\begin{displaymath}
 D_{(\sigma,x)}G (0,(0,0)) = \left( \begin{array}{cc}
    \mu_0 & v_0^t \\[1ex]
    D_\lambda F(\lambda_0,u_0) & D_u F(\lambda_0,u_0)
  \end{array} \right) \; ,
\end{displaymath}
i.e., we suppose that
\begin{displaymath}
 \left\| D_{(\sigma,x)}G (0,(0,0))^{-1} \right\|_{\cL(\R \times U, \R \times U)}
 \le K \; .
\end{displaymath}
For this, we interpret the matrix as a linear map on the product space
$\R \times U$, and the operator norm is the norm in~$\cL(\R \times U, \R \times U)$.
\item[(P3)] Let~$M_1$, $M_2$, $M_3$, and~$M_4$ be Lipschitz constants such
that for all pairs~$(\lambda,u)$ which satisfy $\| u  - u_0\|\le d_u$ and
$| \lambda - \lambda_0| \le d_\lambda$ we have the estimates
\begin{eqnarray*}
  \| D_u F(\lambda,u) - D_u F(\lambda_0,u_0) \|_{\cL(U,U)} & \le &
    M_1 \| u-u_0 \|_{U} + M_2 |\lambda -\lambda_0 | \; , \\[1ex]
  \| D_\lambda F(\lambda,u) - D_\lambda F(\lambda_0,u_0)\|_{\cL(\R,U)} & \le &
    M_3 \| u-u_0 \| _U + M_4 | \lambda -\lambda_0 | \; ,
\end{eqnarray*}
where as usual we will identify the norm in~$\cL(\R,U)$ with the
norm~$\| \cdot \|_U$ in the following.
\end{itemize}
We would like to point out that all of the above three conditions are 
formulated in terms of the nonlinear parameter-dependent function~$F$ and
an approximate solution~$(\lambda_0,u_0)$ of the equation $F(\lambda,u) = 0$.

We now turn our attention to the extended system described by the operator~$G$
introduced in~(\ref{defextG}). It turns out that the above three assumptions
are tailor-made to establish the hypotheses~(H1) through~(H4) from the 
constructive implicit function theorem for the mapping~$G$. One can easily
see that~(P1) implies
\begin{displaymath}
  \| G(0,(0,0)) \|_{\R \times U} \le \rho \; ,
\end{displaymath}
i.e., hypothesis~(H1) is satisfied. Furthermore, using the explicit derivative
formulas from the end of the last subsection, the assumption~(P2) immediately
yields the estimate
\begin{displaymath}
  \| D_{(\sigma,x)}G(0,(0,0)) \|_{\cL(\R \times U,\R \times U)} \le K \; ,
\end{displaymath}
which establishes~(H2). It remains to show that~(P3) furnishes the estimates
in~(H3) and~(H4). For this, let~$\xi$ be defined as in~(\ref{eqn:xi}), and define
the four constants
\begin{eqnarray*}
  L_1 &=& \max(M_1+M_3, M_2+M_4) \; ,\\[1ex]
  L_2 &=& (M_1 + M_3) \| v_0 \|_{U} + (M_2 + M_4) |\mu_0| \; ,\\[1ex]
  L_3 &=& \xi \; , \\[1ex]
  L_4 &=& (M_1\| v_0\|_U + M_2 |\mu_0| ) \| v_0 \|_U +
    (M_3 \|v_0 \|_U + M_4 |\mu_0|)  |\mu_0| \; .
\end{eqnarray*}
Then the constants~$L_1$ through~$L_4$ are the Lipschitz constants for the
extended function~$G$ as required by~(H3) and~(H4). For this, first note that
in view of~(\ref{defextG1}) we have
\begin{displaymath}
  D_{(\sigma,x)}G(\alpha,(\sigma,x)) - D_{(\sigma,x)}G(0,(0,0)) =
  \left( \begin{array}{cc}
    0 & 0 \\[1ex]
    D_\lambda F (w_1) - D_\lambda F (w_2) &
    D_u F (w_1) - D_u F (w_2)
  \end{array} \right),
\end{displaymath}
where~$D_\lambda F$ and~$D_u F$ are evaluated at $w_1 = (\lambda_0 + \alpha \mu_0
+ \sigma, u_0 + \alpha v_0 + x)$ and $w_2 = (\lambda_0,u_0)$. Then one can readily
see that~(H3) follows from~(P3) and the estimates
\begin{eqnarray*}
  & & \|D_{(\sigma,x)}G(\alpha,(\sigma,x)) - 
    D_{(\sigma,x)}G(0,(0,0)) \|_{\cL(\R \times U, \R \times U)} \\[1ex]
  & & \qquad\quad \le \;
    \| D_u F(\lambda_0 + \alpha \mu_0 + \sigma, u_0 + \alpha v_0 + x) -
       D_u F(\lambda_0, u_0)\|_{\cL(U, U)} \\[1ex]
  & & \qquad\qquad\;
    + \; \| D_\lambda F(\lambda_0 + \alpha \mu_0 + \sigma, u_0 + \alpha v_0 + x) -
         D_\lambda F(\lambda_0, u_0)\|_{\cL(\R, U)}  \\[1ex]
  & & \qquad\quad \le \;
    M_1 ( |\alpha| \| v_0 \|_U + \| x \|_U) + M_2 ( |\alpha| |\mu_0| +
    |\sigma|) \\[1ex]
  & & \qquad\qquad\;
    + \; M_3 ( |\alpha| \| v_0 \|_U + \| x \|_U) + M_4 ( |\alpha| |\mu_0| +
    |\sigma|) \\[1ex]
  & & \qquad\quad = \;
    (M_1 + M_3) \|x \|_U + (M_2 + M_4) |\sigma | + ((M_1 + M_3)
    \| v_0 \|_U + (M_2 + M_4) |\mu_0| ) |\alpha| \\[1ex]
  & & \qquad\quad = \;
    L_1 \|(\sigma,x)\|_{\R \times U} + L_2 |\alpha| \; . 
\end{eqnarray*}
Similarly, using~(\ref{defextG2}) one can show that~(H4) follows from~(P1)
and~(P3), in combination with the inequalities
\begin{eqnarray*}
  & & \| D_{\alpha}G(\alpha,(0,0)) \|_{\cL(\R, \R \times U)} \\[1ex]
  & & \qquad\quad \le \;
    \| D_\lambda F(\lambda_0,u_0) \mu_0 + D_u F(\lambda_0,u_0)v_0 \|_U \\[1ex]
  & & \qquad\qquad\;
    + \; \|D_u F(\lambda_0 + \alpha \mu_0, u_0 + \alpha v_0)v_0 -
    D_u F(\lambda_0, u_0)v_0 \|_U \\[1ex]
  & & \qquad\qquad\;
    + \; \|  D_\lambda F(\lambda_0 + \alpha \mu_0, u_0 + \alpha v_0) \mu_0 -
    D_\lambda F (\lambda_0, u_0) \mu_0\|_U  \\[1ex]
  & & \qquad\quad \le \;
    \xi +  (M_1\| v_0\|_U + M_2 |\mu_0| ) |\alpha| \| v_0 \|_U +
    (M_3 \|v_0 \|_U + M_4 |\mu_0|)  |\alpha| |\mu_0| \\[1ex]
  & & \qquad\quad = \; L_3 + L_4 |\alpha| \; .  
\end{eqnarray*}
Altogether, these estimates lead to the following result.
\begin{theorem}[Pseudo-arclength continuation for a branch segment]
Consider the fixed pairs~$(u_0,\lambda_0)$ and~$(v_0,\mu_0)$ in~$\R \times U$,
let~$d_\lambda$ and~$d_u$ be two positive constants, and suppose that our
hypotheses~(P1), (P2), and~(P3) are satisfied. Moreover, assume that both
\begin{displaymath}
   4 K^2 \rho < 1
   \qquad\mbox{ and }\qquad
   2 K \rho < d_u
\end{displaymath}
hold. Then we can choose constants
\begin{displaymath}
  0 < \delta_\alpha \le d_\lambda  \; , \quad
  0 < \delta_u \le d_{u} \; , \qquad
  \mbox{ where }\qquad
  \delta_\alpha \|(\mu_0,v_0)\| + \delta_u \le \min(d_u,d_\lambda) \; ,
\end{displaymath}
and such that
\begin{displaymath}
  2 K L_1 \delta_u + 2 K L_2 \delta_\alpha \le 1
  \quad\mbox{ and }\quad
  2 K \rho + 2 K L_3 \delta_\alpha + 2 K L_4 \delta_\alpha^2 \le \delta_u
    \; .
\end{displaymath}
Then for every $\alpha \le \delta_\alpha$ there exists a unique~$(\sigma,x)$
in the zero set of~$G$ with~$\|(\sigma,x)\| \le \delta_u$. 

These statements guarantee that there is a unique element of the zero set
of~$F$ which lies on the hyperplane orthogonal to the center line in the slanted
box between~$(\lambda_0,u_0)$ and $(\lambda_0 + \delta_\alpha \mu_0, u_0+\delta_\alpha v_0)$
and passes through the point $(\lambda_0 + \alpha \mu_0, u_0+\alpha v_0)$.
This unique zero is given by $(\lambda_0 + \alpha \mu_0, u_0+\alpha v_0)+(\sigma,x)$.
Additionally, let
\begin{displaymath}
  \delta_{\min} = 2 K \rho \; .
\end{displaymath}
Then for $\alpha=0$ we can guarantee that the resulting pair in the zero 
of~$G$ is accurate within~$\delta_{\min}$ of~$(\lambda_0,u_0)$, 
and this zero is unique within the set $\|(\sigma,x)\| \le
\min \{ (2 K L_1)^{-1}, d_u, d_\alpha \}$.
\end{theorem}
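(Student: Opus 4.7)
The plan is to apply Theorem~\ref{nift:thm}, the constructive implicit function theorem, to the extended operator $G : \R \times (\R \times U) \to \R \times U$ defined in~(\ref{defextG}), expanding about the base point $(\alpha^*, (\sigma^*, x^*)) = (0,(0,0))$. In this setup $\cP = \R$ and $\cX = \cY = \R \times U$ equipped with the product max-norm stated at the start of the subsection. The bulk of the technical work has already been carried out in the paragraphs immediately preceding the theorem, so the proof is essentially a matter of assembling those pieces and invoking the IFT, then translating the conclusion back into geometric language about zeros of $F$.

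First, I would confirm that the four hypotheses of Theorem~\ref{nift:thm} are satisfied. Hypothesis~(H1) follows from~(P1), because the first component of $G(0,(0,0))$ is zero by construction while the second is $F(\lambda_0, u_0)$, whose norm is at most $\rho$; with the chosen max-norm this yields $\|G(0,(0,0))\| \le \rho$. Hypothesis~(H2) is literally (P2), since $D_{(\sigma,x)}G(0,(0,0))$ is the matrix whose inverse is controlled there. Hypotheses~(H3) and~(H4) are delivered by the two explicit chain-rule computations that immediately precede the theorem statement, with the Lipschitz constants $L_1, L_2, L_3, L_4$ defined in terms of $M_1, \ldots, M_4$, $\|v_0\|_U$, $|\mu_0|$, and $\xi$. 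The only subtle ingredient is to ensure that those Lipschitz bounds actually apply throughout the continuation box; this is the purpose of the sizing constraint $\delta_\alpha \|(\mu_0, v_0)\| + \delta_u \le \min(d_u, d_\lambda)$, which guarantees that for $|\alpha| \le \delta_\alpha$ and $\|(\sigma, x)\| \le \delta_u$ the shifted argument $(\lambda_0 + \alpha \mu_0 + \sigma, u_0 + \alpha v_0 + x)$ remains in the region where~(P3) is valid.

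With~(H1)--(H4) in hand, the hypotheses $4K^2 \rho < 1$ and $2K\rho < d_u$ feed directly into~(\ref{nift:thm1}), and Theorem~\ref{nift:thm} produces $\delta_\alpha, \delta_u$ satisfying~(\ref{nift:thm2}) and gives, for each $|\alpha| \le \delta_\alpha$, a unique $(\sigma, x)$ with $\|(\sigma, x)\| \le \delta_u$ such that $G(\alpha, (\sigma, x)) = 0$. The geometric translation is immediate from the definition~(\ref{defextG}): the vanishing of the first component of $G$ is exactly the orthogonality relation $\mu_0 \sigma + v_0^t x = 0$, and the vanishing of the second component is the zero equation $F(\lambda_0 + \alpha \mu_0 + \sigma, u_0 + \alpha v_0 + x) = 0$. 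This is precisely the claim that the unique zero of $F$ in the slanted box lies on the hyperplane orthogonal to $(\mu_0, v_0)$ through the point $(\lambda_0 + \alpha \mu_0, u_0 + \alpha v_0)$.

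Finally, the accuracy statement at $\alpha = 0$ is a direct consequence of the second inequality in~(\ref{nift:thm2}): setting $\alpha = 0$ reduces it to $2K\rho \le \delta_u$, so the smallest admissible $\delta_u$ equals $\delta_{\min} = 2K\rho$, which bounds the distance from the validated zero back to the initial approximation $(\lambda_0, u_0)$; the uniqueness radius stated in the theorem comes from setting $\delta_\alpha = 0$ in the first inequality of~(\ref{nift:thm2}) together with the admissibility constraints on $\delta_u$. I do not foresee a conceptual obstacle: the extended system $G$ has been engineered so that the constructive IFT slots in cleanly, and the potentially finicky step, controlling the Lipschitz data for the block matrix $D_{(\sigma, x)} G$ and the mixed derivative $D_\alpha G$ in terms of the Lipschitz data for $F$, has already been executed in the text via the triangle inequality and the explicit formulas~(\ref{defextG1}) and~(\ref{defextG2}).
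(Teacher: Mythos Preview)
Your proposal is correct and follows essentially the same approach as the paper: apply the constructive implicit function theorem (Theorem~\ref{nift:thm}) to the extended operator~$G$ at the base point $(0,(0,0))$, using the Lipschitz estimates already derived in the text, and note that the extra sizing constraint $\delta_\alpha \|(\mu_0,v_0)\| + \delta_u \le \min(d_u,d_\lambda)$ is exactly what is needed to keep the shifted argument inside the region where~(P3) applies. The paper's own proof is more terse but makes the identical points.
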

\begin{proof}
To show the theorem we follow the proof of~\cite[Theorem~5]{sander:wanner:16a}.
Aside from the changes in the Lipschitz constants which have already been derived
before the formulation of the theorem, the  only changes to the cited proof 
are due to the fact that for a fixed parameter of~$G$, the values of both the 
parameter~$\lambda$ and the phase space value $x$ of~$F$ can vary. Therefore, in order to guarantee that 
the Lipschitz estimates on~$F$ hold, we need to assure that for every
$\alpha \le \delta_\alpha$ and all~$\|(\sigma,x)\| \le \delta_u$ the norm 
$\| \alpha (\mu_0,v_0) +  (\sigma,x)\|$ is bounded by both~$d_u$ and~$d_\lambda$.
This immediately leads to the additional constraints in the formulation of
the theorem.
\end{proof}
\begin{figure}[tb] \centering
     \includegraphics[height=4cm]{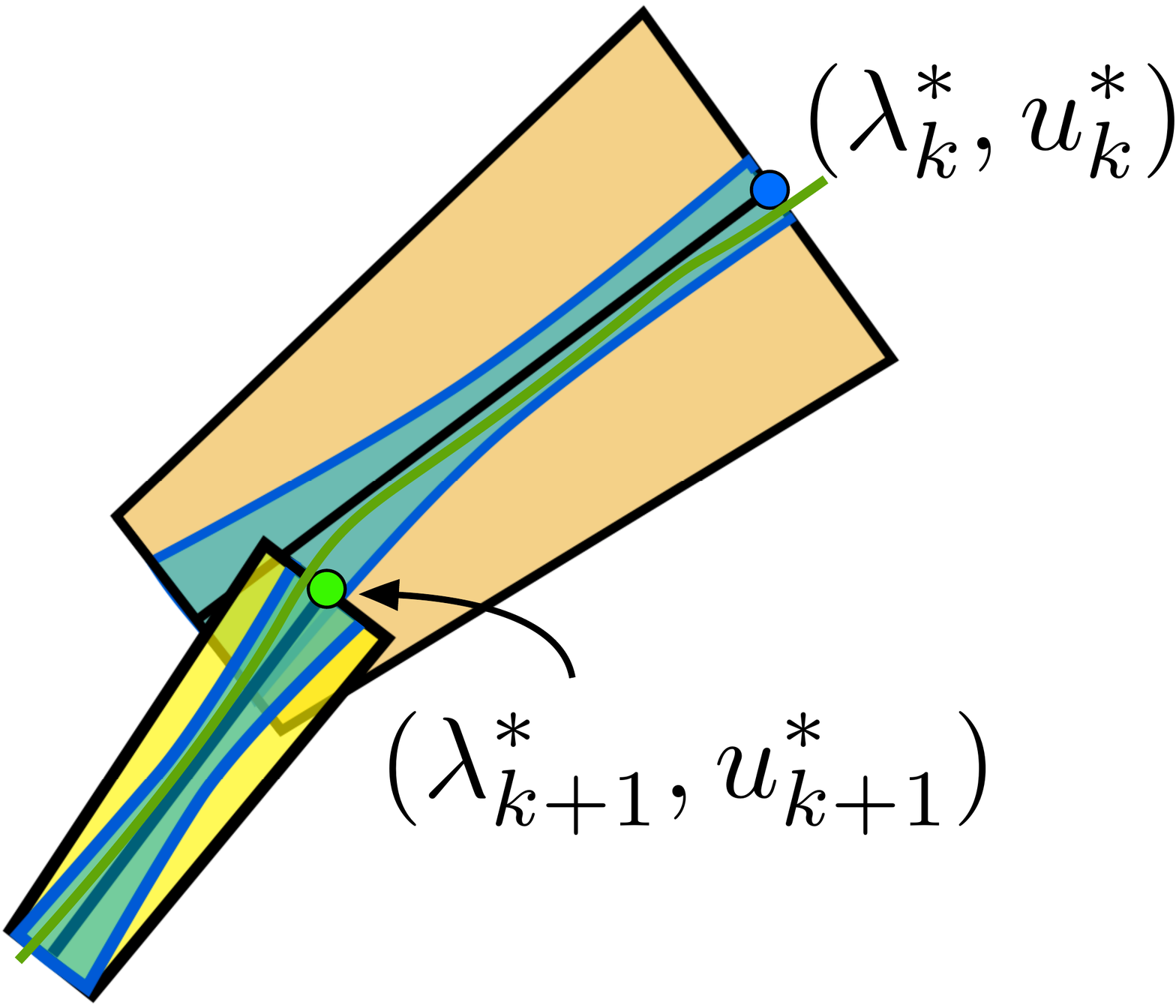}
     \hspace*{2cm}
     \includegraphics[height=4cm]{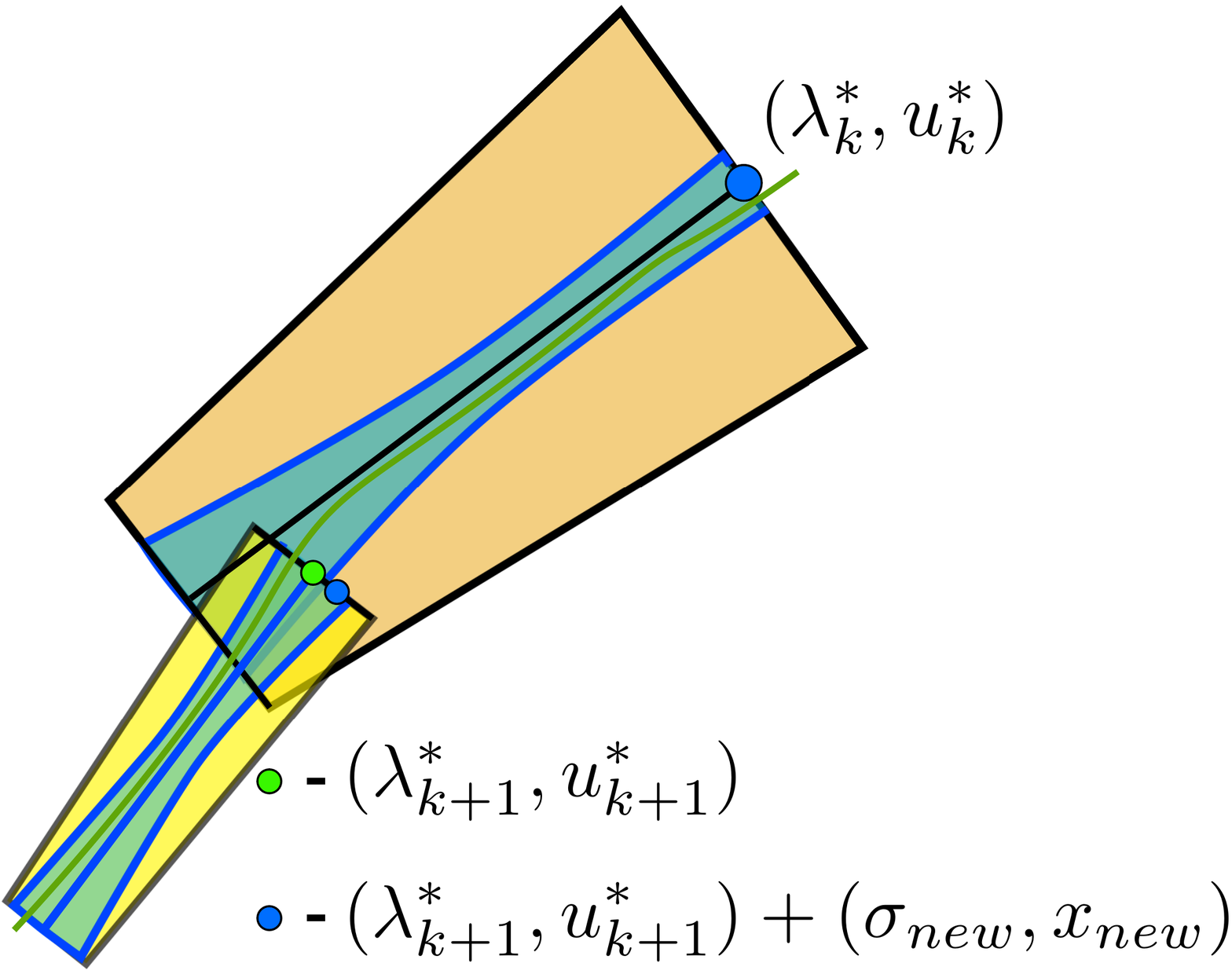}            
 \caption{Left image: Associated with each successive approximation, there is 
 a uniqueness region and an accuracy region. 
 Right image: In order to guarantee that the $k$-th and $(k+1)$-st region 
 enclose the same  component of the zero set  (the green curve), 
 we must verify the linking condition. This 
 requires that the accuracy curve of the $(k+1)$-st
 box at $\alpha=0$ (such as the blue point on the upper edge of the $(k+1)$-st blue box)  
 is contained in the uniqueness region of the $k$-th box (orange region). 
  }
  \label{fig:ac2}
\end{figure}%

The above theorem gives a  method for validating a branch segment of the zero set
within a single slanted box. In practice we use this result successively to validate
a whole solution branch. For each pair~$(\lambda_k^*, u_k^*)$, and for the approximate
tangent~$(\mu_k, v_k)$, we then define an extended function~$G_k$, and validate a 
branch segment for~$F$ within the k-th box. For a fixed parameter value $\alpha_k \le
\delta_\alpha$, we then use Newton's method to find an approximate zero of~$F$ which
is orthogonal to~$(\mu_k, v_k)$, i.e., which is a zero of~$G_k$. We abbreviate this
approximate zero as~$(\lambda_{k+1}^*, u_{k+1}^*)$, and can now repeat the entire
process for the $(k+1)$-st branch segment, see also Figure~\ref{fig:ac2}. What
remains to be shown is that the successive validated boxes are linked, meaning
that the branch segment in the $k$-th box and the branch segment in the $(k+1)$-st
box are on the same branch. That is, the accuracy region of the $(k+1)$-st box has
to be contained within the uniqueness region of the $k$-th box at the point~$\alpha_k$
where we made the numerical estimate. We give the linking condition for two boxes in
the next theorem.
\begin{theorem}[Linking branch segments]
Let $\delta_{k+1,\min} = 2 K_{k+1} \rho_{k+1}$ be the accuracy of the solution
\begin{displaymath}
  (\lambda_{k+1}^*,u_{k+1}^*) =
  (\lambda_k^* + \alpha_k \mu_k + \sigma^*, u_k^* + \alpha_k v_k + x^*)
  \; .
\end{displaymath}
In order to guarantee that the two validated boxes are linked, we require
the estimates
\begin{displaymath}
   |\alpha_k| + \frac{\delta_{k+1,\min}}{\|(\mu_k,v_k)\|} < \delta_{k,\alpha}
   \qquad\mbox{ and }\qquad
   |(\sigma^*,x^*)| + \delta_{k+1,\min} <  \delta_{k,u} \; .
\end{displaymath}
\end{theorem}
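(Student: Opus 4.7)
The plan is to show that the unique zero of $F$ which the pseudo-arclength continuation theorem produces for the $(k+1)$-st box at its starting parameter value actually lies inside the uniqueness region of the $k$-th box, so that the uniqueness assertion for $G_k$ forces it to coincide with the branch segment already validated in the $k$-th box.

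First, I would apply the accuracy statement of the pseudo-arclength continuation theorem to the extended system $G_{k+1}$ at parameter value $\alpha = 0$. This produces a pair $(\sigma',x')$ with $\|(\sigma',x')\| \le \delta_{k+1,\min} = 2 K_{k+1} \rho_{k+1}$ for which $(\lambda_{k+1}^* + \sigma', u_{k+1}^* + x')$ is a zero of $F$. Substituting the definition of $(\lambda_{k+1}^*,u_{k+1}^*)$, this zero equals
\begin{displaymath}
  \bigl( \lambda_k^* + \alpha_k \mu_k + \sigma^* + \sigma', \;
         u_k^* + \alpha_k v_k + x^* + x' \bigr) \; .
\end{displaymath}

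Second, I would re-express this point in the slanted coordinate system of the $k$-th box. By construction of the Newton corrector we have $(\sigma^*,x^*) \perp (\mu_k,v_k)$, so I would orthogonally decompose $(\sigma',x') = c \, (\mu_k,v_k) + (\tilde\sigma, \tilde x)$ with $(\tilde\sigma,\tilde x) \perp (\mu_k,v_k)$. Setting $\alpha := \alpha_k + c$ and $(\sigma,x) := (\sigma^* + \tilde\sigma,\, x^* + \tilde x)$, the true zero is then expressed in the form
\begin{displaymath}
  \bigl( \lambda_k^* + \alpha \mu_k + \sigma, \;
         u_k^* + \alpha v_k + x \bigr), \qquad (\sigma,x)\perp (\mu_k,v_k),
\end{displaymath}
which is precisely the form required to lie on the hyperplane at parameter $\alpha$ of the $k$-th slanted box.

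Third, I would estimate both $|\alpha|$ and $\|(\sigma,x)\|$. Cauchy--Schwarz gives $|c| \, \|(\mu_k,v_k)\| \le \|(\sigma',x')\|$, whence $|c| \le \delta_{k+1,\min}/\|(\mu_k,v_k)\|$, and since orthogonal projection is nonexpansive we also have $\|(\tilde\sigma,\tilde x)\| \le \|(\sigma',x')\| \le \delta_{k+1,\min}$. Combining with the triangle inequality yields
\begin{displaymath}
  |\alpha| \;\le\; |\alpha_k| + \frac{\delta_{k+1,\min}}{\|(\mu_k,v_k)\|}
       \;<\; \delta_{k,\alpha}, \qquad
  \|(\sigma,x)\| \;\le\; \|(\sigma^*,x^*)\| + \delta_{k+1,\min}
       \;<\; \delta_{k,u},
\end{displaymath}
where the strict inequalities are exactly the hypotheses of the theorem. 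This places the zero inside the uniqueness region of the $k$-th box, and by the uniqueness conclusion of the pseudo-arclength continuation theorem applied to $G_k$, the zero must agree with the unique zero of $G_k$ on that hyperplane. Hence both branch segments describe the same connected piece of the zero set of $F$.

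The main technical obstacle is to reconcile the maximum norm $\|(\alpha,x)\| = \max\{|\alpha|, \|x\|_U\}$ used throughout the pseudo-arclength framework with the Euclidean inner product that is implicit in the orthogonality condition $\mu_k \sigma + v_k^t x = 0$ and in the Cauchy--Schwarz estimate on $c$. In practice one either performs the projection step entirely in the Euclidean norm and then converts using norm equivalence on the finite-dimensional space $\mathbb R \times U$, or one interprets $\|(\mu_k,v_k)\|$ in the statement as the appropriately chosen norm; either route is routine and produces the sufficient conditions as stated.
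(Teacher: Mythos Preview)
Your proof is correct and follows essentially the same approach as the paper: obtain the true zero from the accuracy region of the $(k+1)$-st box, orthogonally decompose the correction vector relative to the direction $(\mu_k,v_k)$, bound the tangential and orthogonal components separately, and invoke uniqueness in the $k$-th box. Your final remark about reconciling the maximum norm with the Euclidean orthogonality is a genuine subtlety that the paper glosses over with the phrase ``by the orthogonality of the two vectors,'' so you have actually been slightly more careful here.
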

\begin{proof}
The accuracy of the $(k+1)$-st solution at $\alpha = 0$ is given by~$\delta_{k+1,\min}$.
That is, there exists a unique exact solution to $F=0$ of the form
\begin{displaymath}
  (\tilde{\lambda},\tilde{u}) = (\lambda_{k+1}^*+\sigma_{new},u_{k+1}^*+x_{new}) \; ,
\end{displaymath}
where $\|(\sigma_{new},x_{new})\| < \delta_{k+1,\min}$. In order to derive our linking
condition we need to establish that this solution is contained in the uniqueness region
of the $k$-th segment. We can therefore write
\begin{displaymath}
  (\tilde{\lambda},\tilde{u}) - (\lambda_k^*,u_k^*) =
  (\alpha_k + \alpha^+) (\mu_k,v_k) + (\sigma^* + \sigma^+,x^* + x^+) \; ,
\end{displaymath}
where $(\sigma_{new},x_{new}) = \alpha^+ (\mu_k,v_k) + (\sigma^+,x^+)$,
and~$(\mu_k,v_k)$ is orthogonal to~$(\sigma^+,x^+)$. Thus we have 
\begin{displaymath}
  \| \alpha^+ (\mu_k,v_k) +  (\sigma^+,x^+) \| < \delta_{k+1,\min} \; .
\end{displaymath}
By the orthogonality of the two vectors,  both the estimate $|\alpha^+|
\|(\mu_k,v_k)\| < \delta_{k+1,\min}$ and the estimate $\|(\sigma^+,x^+)\| <
\delta_{k+1,\min}$ are satisfied. In order to satisfy the linking condition,
we have to require that both $|\alpha_k + \alpha^+| < \delta_{k,\alpha}$
and $\|(\sigma^*+\sigma^+,x^*+x^+)\| < \delta_{k,u}$ hold.
This translates into the conditions
\begin{displaymath}
  |\alpha_k + \alpha^+| \le |\alpha_k| +
  \frac{\delta_{k+1,\min}}{\|(\mu_k,v_k)\|} < \delta_{k,\alpha} \; ,
\end{displaymath}
as well as
\begin{displaymath}
  \|(\sigma^*+\sigma^+,x^*+x^+)\| \le
  \|(\sigma^*,x^*)\| + \delta_{k+1,\min} < \delta_{k,u} \; . 
\end{displaymath}
This completes the proof of the theorem.
\end{proof}

\subsection{Preconditioning the coral map}\label{sec:preconditioning}

If we use the above method on the coral system, it is extremely slow to produce
the bifurcation diagram. This is due to the different relative sizes of the
components of the population and the parameter. We are able to significantly
speed up the method by using preconditioning. In particular, for $k = 1,\dots,d$
let
\begin{displaymath}
  \tilde{f}_k(\tilde{R},\tilde{u}) =
  \frac{f_k(100 \tilde{R}, (s_1 \tilde{u}_1, \dots, s_d \tilde{u}_d)) }{s_k}
  \; ,
\end{displaymath}
where~$s_1,\ldots,s_d$ are empirically determined positive scale constants. Then
it is clear that if we write $(R,u)= (100 \tilde{R}, (s_1 \tilde{u}_1,\dots, s_d \tilde{u}_d))$,
then~$(R,u)$ is a fixed point of~$f$ if and only if~$(\tilde{R},\tilde{u})$ is a
fixed point of the preconditioned map~$\tilde{f}$. However, the map~$\tilde{f}$ is
better scaled in the sense that we expect all components and the parameter to be of
the same order of magnitude. Therefore the pseudo-arclength continuation can be
performed more efficiently. In particular, we find that the size of~$\delta_\alpha$
in the preconditioned version is (in comparable coordinates) around two orders of
magnitude larger than those for the system without modification. This means that
we are able to validate a much larger portion of the bifurcation diagram with the
same number of continuation steps.  Figure~\ref{fig:comp}
shows 5000 continuation steps for the preconditioned case starting at 
$R = 300$ in the upper right corner, shown in blue. For comparison purposes, 
4000 continuation steps are shown  in red for the unmodified
case. The bifurcation curve goes through a limit point and almost to $\|u\|=0$
for the preconditioned case, but is hardly even a visible piece of red curve for
the original unmodified map. A similar preconditioning is performed in the case
of the bifurcation points, as described in the next section. 

\section{Validation of the bifurcation points}
\label{sec:validation}

In this section, we discuss the validation of the bifurcation points. Namely, 
we have used a computer-assisted proof to validate the Neimark-Sacker bifurcation point, 
where the invariant circles form in Section~\ref{subsec:hopf} and 
the saddle-node bifurcation point in Section~\ref{subsec:saddle}. 
In each case, to do so  we create an extended system $H$ such that $H=0$ guarantees
 the needed conditions for a bifurcation point. We then  apply
the constructive implicit function theorem to $H$. In both  cases, we use  interval arithmetic 
for a separate 
computational validation of the extra transversality and nondegeneracy conditions. 
We also prove that there is a  transcritical bifurcation point on the extinction axis. 
However, this last case does not require a computer-assisted proof for 
validation, since the calculations are simple enough for a closed form calculation.

\subsection{Validation of the Neimark-Sacker bifurcation point}
\label{subsec:hopf}

In Sections~\ref{sec:diagram} and~\ref{subsec:rotation}, we observed that at $(R,P) \approx (154.1,2689)$, there is 
a change in stability of the fixed points, and for $R>154.1$, typical initial conditions 
converge to populations which are  oscillating in time. This is the behavior associated with a Neimark-Sacker
bifurcation. 
In this section we detail the process of rigorous validation of the Neimark-Sacker 
bifurcation point seen in the upper right corner of Figure \ref{fig:bif}. While this
is the first time that a rigorous validation of a Neimark-Sacker bifurcation has been
performed in this way, rigorous validation of Hopf bifurcations was performed
in~\cite{vandenberg:etal:p20a} in the context of ordinary and partial differential equations,
but using a quite different method. Rather than considering conditions along a curve
of fixed points or equilibria, instead the method used a validated continuation of
periodic orbits with a renormalization technique, validating that there was a
bifurcation of equilibria at the turning point of this invariant closed curve
of solutions. Moreover, computer-assisted proofs were used in~\cite{capinski:etal:20a}
to rigorously establish an invariant circle in a two-dimensional map, which is created
via a Neimark-Sacker bifurcation. They do not, however, establish the bifurcation
point itself directly. While it would be interesting to adapt their method to the 
coral model, this lies beyond the scope of the current paper. 

We now proceed with our validation of 
the Niemark-Sacker bifurcation. As a first step, we state the standard theoretical Neimark-Sacker bifurcation 
theorem found in a bifurcation theory textbook. We then show how to adapt this classical result to create 
a rigorous computer-assisted  bifurcation theorem. 
\begin{theorem}[Neimark-Sacker bifurcation point] \label{thm:neimarksacker}
There is a Neimark-Sacker bifurcation for the coral system in~(\ref{eqn:model1})
and~(\ref{eqn:model2}) for the basic reproduction number $R_* \approx 154.1$ and
with polyp population density $P_* \approx 2689.$ The precise error bounds are stated in Table~\ref{tab:hopfproof}.
\end{theorem}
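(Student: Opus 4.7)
The plan is to mimic the bifurcation-point strategy outlined at the end of Section~\ref{sec:arclength}: construct a square extended system $H$ whose zero set is (generically) zero-dimensional and whose vanishing encodes both the fixed-point equation and the presence of a unit-modulus complex-conjugate eigenvalue pair of the Jacobian, and then apply Theorem~\ref{nift:thm} in its parameter-free form (setting $L_2 = L_3 = L_4 = 0$) to certify a unique zero of $H$ near a numerical approximation. The open genericity conditions required for a Neimark-Sacker bifurcation (non-resonance, transversality of the eigenvalue crossing, non-vanishing first Lyapunov coefficient) are then verified as a postprocessing step by direct interval computations on the validated enclosure, exactly as the authors themselves indicate in the opening paragraph of Section~\ref{sec:validation}.

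Concretely, I would introduce the augmented variable
\begin{equation*}
  y = (\lambda, x, \theta, v_R, v_I) \in \R^{1+13+1+13+13} = \R^{41},
\end{equation*}
together with an auxiliary fixed vector $w \in \R^{13}$ (for example, the numerical value of $v_R$ at the approximate bifurcation point), and take
\begin{equation*}
  H(y) = \begin{pmatrix}
    f(\lambda, x) - x \\[1ex]
    D_x f(\lambda, x)\, v_R - \cos\theta\, v_R + \sin\theta\, v_I \\[1ex]
    D_x f(\lambda, x)\, v_I - \sin\theta\, v_R - \cos\theta\, v_I \\[1ex]
    w^t v_R - 1 \\[1ex]
    w^t v_I
  \end{pmatrix}.
\end{equation*}
This gives $13 + 13 + 13 + 1 + 1 = 41$ scalar equations in $41$ unknowns: the first block pins down a fixed point, the middle two blocks say that $v = v_R + i v_I$ is a complex eigenvector of $D_x f(\lambda, x)$ with eigenvalue $\mu = \cos\theta + i \sin\theta$ on the unit circle, and the last two remove the $\C^\times$ gauge freedom in $v$. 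Before applying the constructive implicit function theorem I would rescale coordinates in the spirit of Section~\ref{sec:preconditioning}, since $x$, the eigenvector components, and $\theta$ live on very different scales and the resulting Jacobian $DH$ would otherwise be badly conditioned.

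The steps are then as follows. Step~1: compute a high-precision numerical zero $y^* = (\lambda^*, x^*, \theta^*, v_R^*, v_I^*)$ by Newton iteration starting from the numerically detected bifurcation point of Section~\ref{subsec:rotation}. Step~2: verify hypotheses (H1)--(H3) of Theorem~\ref{nift:thm} for the preconditioned $H$ via interval arithmetic, namely evaluate $\|H(y^*)\|$ to obtain the residual $\rho$, build an explicit numerical inverse $A^\dagger$ of $DH(y^*)$ and bound the operator norm of the true inverse by a Neumann-series estimate to obtain $K$, and produce a Lipschitz constant $L_1$ for $DH$ on a small box around $y^*$. The smallness inequality $4 K^2 \rho L_1 < 1$ from~(\ref{nift:thm1}) then delivers a unique $y_*$ with $H(y_*) = 0$ inside an explicit ball, which provides rigorous enclosures of $R_*$, the corresponding polyp density $P_*$, and the critical eigenvalue $\mu_* = e^{i\theta_*}$. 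Step~3: on this enclosure check the remaining Neimark-Sacker genericity conditions; the strong-resonance conditions $\mu_*^k \neq 1$ for $k = 1,2,3,4$ reduce to bounding $\cos(k\theta_*)$ away from $1$, the transversality $\tfrac{d}{d\lambda}|\mu(\lambda)|^2\big|_{\lambda_*} \neq 0$ follows from implicit differentiation of $H = 0$ (the requisite $DH$-inverse being already validated in Step~2), and the first Lyapunov coefficient is assembled from enclosures of the first, second, and third $x$-derivatives of $f$ at the validated fixed point.

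The main obstacle is Step~2. The middle blocks of $DH$ involve the Hessian of $f$ contracted against the eigenvector $v$, so controlling $K$ and $L_1$ requires tight interval bounds on $D_x^2 f$ and $D_x^3 f$, and the extended problem is roughly three times the dimension of the fixed-point problem while being considerably worse conditioned. Much of the effort will go into choosing the preconditioning scales carefully enough that $4 K^2 \rho L_1 < 1$ holds with room to spare. Once a tight validated enclosure of $y_*$ has been produced, the genericity checks of Step~3 reduce to evaluating explicit analytic expressions on an interval box and should be routine.
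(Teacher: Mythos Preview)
Your proposal is essentially the same strategy as the paper's proof: build a square extended system encoding the fixed-point equation together with the existence of a unit-modulus complex eigenpair, apply Theorem~\ref{nift:thm} in its parameter-free form (with preconditioning) to certify an isolated zero, and then verify the transversality and nondegeneracy conditions a posteriori via interval arithmetic on the validated enclosure.

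The only substantive differences are implementation choices. The paper parametrizes the eigenvalue by a pair $(a,b)$ together with the constraint $a^2+b^2=1$, and normalizes the eigenvector by the two quadratic conditions $\|u\|^2=1$, $\|w\|^2=1$, yielding a $42$-dimensional system; you parametrize the eigenvalue directly by the angle $\theta$ and use the linear phase conditions $w^t v_R = 1$, $w^t v_I = 0$, which drops the dimension to $41$ and makes the last two rows of $DH$ constant. Both are legitimate gauge fixings, and your version is arguably a bit cleaner. The paper states the transversality and nondegeneracy checks in the explicit Kuznetsov form (conditions (c)--(e) involving the left eigenvector $p$ and the multilinear maps $B$, $C$), whereas you phrase the transversality via implicit differentiation of $H=0$; these are equivalent formulations.

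One small omission in your outline: having $H(y_*)=0$ only certifies the \emph{existence} of a unit-modulus complex pair. To meet the Neimark-Sacker hypotheses you must also verify that the remaining eleven eigenvalues of $D_x f(\lambda_*,x_*)$ lie strictly inside the unit disk. The paper does this by a separate Intlab eigenvalue enclosure on the validated box; you should add that check to your Step~3.
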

The remainder of this subsection is devoted to the proof of this theorem. Our approach
is to verify the classical conditions for a {\em Neimark-Sacker bifurcation\/}, as described
for example in~\cite{kuznetsov:98a} --- and which we briefly review in the following.
Consider a smooth map $f: \R \times \R^d  \to \R^d$. Furthermore, we begin by assuming
the following two conditions:
\begin{enumerate}
\item[(a)] {\em Existence of a fixed point:\/} The map~$f$ has a fixed point at a specific
parameter value, i.e., we assume that $f(\lambda_0,x_0) = x_0$.
\item[(b)] {\em Pair of imaginary eigenvalues on the unit circle:\/} The Jacobian
matrix $D_x f(\lambda_0,x_0)$ has exactly one simple conjugate pair of imaginary eigenvalues
on the unit circle. We denote these eigenvalues by~$e^{\pm i \theta_0}$, for some
angle~$0 < \theta_0 < \pi$.
\end{enumerate}
These two conditions have to be supplemented by another three transversality and
nondegeneracy conditions, which will be stated in detail below. For this, however,
we first need to introduce some additional notation.

Due to the implicit function theorem, as long as the Jacobian matrix in~(b) does not
have the eigenvalue~$1$, there exists a smooth curve of locally unique fixed points,
which we denote by $(\lambda,x_0(\lambda))$. Moreover, we define
\begin{displaymath}
  A(\lambda) = D_x f (\lambda,x_0(\lambda)) \; .
\end{displaymath}
We would like to point out that in our application to the coral system, the
rigorously established existence of the branch of fixed points as a side effect
also implies that along the branch near the Neimark-Sacker point, the Jacobian
matrix never has an eigenvalue~$1$.

Now let~$p \in \C^d$ and~$q \in \C^d$ denote the right eigenvectors of~$A(\lambda_0)$
corresponding to~$e^{i \theta_0}$ and~$e^{-i \theta_0}$, respectively, and normalized
in such a way that $\langle p,q \rangle = 1$, where the bracket notation denotes the
usual complex scalar product $\langle p,q \rangle := \overline{p}^t q$. Finally, by
Taylor's formula we can expand the function~$f$ in the form
\begin{equation} \label{eqn:ABCexpansion}
  f(\lambda_0,x) - x_0 =
  A(\lambda_0) x + \frac{1}{2} B(x,x) + \frac{1}{6} C(x,x,x) + O(\|x\|^4) \; ,
\end{equation}
where~$B$ and~$C$ denote the second- and third-order derivative terms at the
point~$(\lambda_0,x_0)$ in the form
\begin{displaymath}
  B_i(y,z) = \sum_{j,k=1}^{d} \dfrac{\partial^2 f}{\partial x_j
    \partial x_k}(\lambda_0,x_0) y_j z_k
  \quad\mbox{ and }\quad
  C_i(y,z,w) = \sum_{j,k,l=1}^{d} \dfrac{\partial^3 f}{\partial x_j
    \partial x_k \partial x_l}(\lambda_0,x_0) y_j z_k w_l \; . 
\end{displaymath}
After these preparations, we can now complete our description of the conditions needed
for the Neimark-Sacker theorem:
\begin{enumerate}
\item[(c)] {\em Transversality condition:\/} Using the notation above, suppose that
\begin{displaymath}
  \mathrm{Re} \left( e^{-i \theta_0} \left\langle p, \frac{dA}{d\lambda}(\lambda_0)q
  \right\rangle \right) \ne 0 \; .
\end{displaymath}
\item[(d)] {\em Nondegeneracy condition I:\/} Suppose that
\begin{displaymath}
  \theta_0 \ne \frac{\pi}{2}
  \quad\mbox{ and }\quad
  \theta_0 \ne \frac{2 \pi}{3} \; .
\end{displaymath}
\item[(e)] {\em Nondegeneracy condition II:\/} Suppose that
\begin{eqnarray*}
  Re \left( \right.  e^{-i \theta_0} ( \langle p,C(q,q,\bar{q}) \rangle
   + 2 \langle p,B(q,(I-A)^{-1} B(q,\bar{q})) \rangle & & \\[1ex]
  + \langle p, B(\overline{q}, (e^{2 i \theta_0} I - A)^{-1} B(q,q)) \rangle )
  \left. \right) & \ne & 0 \; .
\end{eqnarray*}
\end{enumerate}
To summarize, the transversality condition implies that the pair of complex
conjugate eigenvalues at~$\lambda_0$ crosses the imaginary axis with nonzero
speed. The first nondegeneracy condition indicates that the eigenvalues~$e^{\pm i\theta_0}$
are not $k$-th roots of unity for $k = 1,\ldots,4$. Since the proof of the 
Neimark-Sacker theorem is based on the Poincar\'e normal form theorem, this
condition excludes resonances. Finally, the left-hand side of the second
nondegeneracy condition gives the coefficient of the cubic term in the
complex Poincar\'e normal form, and its sign distinguishes between a sub-
and super-critial Neimark-Sacker bifurcation. For more details we refer the
reader to the part of~\cite[Section~5.4]{kuznetsov:98a} devoted to the 
Neimark-Sacker bifurcation.

Under the above conditions, the Neimark-Sacker theorem guarantees that a locally
unique invariant closed curve bifurcates from the set of fixed points at the
point~$(\lambda_0,x_0)$. As already mentioned, the type of bifurcation depends
on the sign of the left-hand side of~(e).
%
\begin{table}[tb] \centering
  \begin{tabular}{|c|c|c|c|c|c|}
    \hline
   $R$ &  $\lambda$ & $x_1$ & $P$ &  $\delta_1$ & $\delta_2$ \\
    \hline
   $154.1$ & $5.286$ & $1794$ & $2689$ & $1.473 \cdot 10^{-10}$ & $1.220
     \cdot 10^{-8}$\\
   \hline\hline
   $\rho$ &  $K$ & $L_1$ & (c) & (d) & (e)  \\
     \hline    
     $6.166 \cdot 10^{-11}$ & $1.000$ & $4.097 \cdot 10^{7}$ & $4.338 \cdot 10^{-2}$
     & $46.85$ & $-1.21 \cdot 10^{-6}$\\
    \hline
  \end{tabular}
  \caption{Validation constants for the system~(\ref{eq:hopf42}) at the
  Neimark-Sacker bifurcation point. All values are written with four decimal
  places, unless less accuracy is known. For more efficient computation, we
  multiplied by a preconditioning matrix and determined the bounds~$\rho$, $K$,
  and~$L_1$. We selected a matrix close to the Jacobian matrix of~$H_{ns}$, whose inverse
  was used as a preconditioner. The accuracy constant~$\delta_1$ and the isolation
  bound~$\delta_2$ were derived using~$\rho$, $K$ and~$L_1$. For the three conditions~(c),
  (d), and~(e), which were checked separately after the validation involving~$H_{ns}$, we
  used an interval arithmetic enclosure of the approximate solution with radius~$\delta_1$.
  Note that the angle in~(d) is given in degrees.}
  \label{tab:hopfproof}
\end{table}%

In order to create the validation version of this theorem, we use a suitable
extended system to validate assumptions~(a) and~(b). After having established
an existence and uniqueness result for this extended system, one can then validate
conditions~(c), (d), and~(e) separately using interval arithmetic. For convenience, 
we have converted the complex system into the following real system of equations.  
We are seeking zeros of the function $H_{ns}: \R^m \to \R^m$, which is 
defined as
\begin{equation} \label{eq:hopf42}
  H_{ns}(x,\lambda,w,u,a,b) = \left(
  \begin{array}{c}
    f(\lambda,x) - x\\[0.5ex]
    D_xf(\lambda,x) w - aw + bu \\[0.5ex]
    D_xf(\lambda,x)u - bw - au \\[0.5ex]
    a^2 + b^2 - 1 \\[0.5ex]
    \Vert w \Vert^2 - 1 \\[0.5ex]
    \Vert u \Vert^2 - 1 
  \end{array}
  \right) \; .
\end{equation}
The first equation in the system is the fixed point condition. The second through
fourth equations form the simple complex eigenvalue pair condition, where we write
$e^{\pm i \theta_0} = a \pm i b$, and the eigenvectors~$p$ and~$q$ are given by
$u \pm i w$, up to normalization. The last two equations are included to single
out a locally unique eigenvector.

For a function of the form $f: \R \times \R^d \to \R^d$, we have $x \in \R^d$,
$\lambda \in \R$, $u,w \in \R^d$, as well as $a, b \in \R$. Therefore, the extended
system $H_{ns}: \R^m \to \R^m$ lives in dimension $m = 3 d + 3$. In our numerical
validation, we are working with a $13$-dimensional system, implying that this
extended system has dimension~$42$.

Using standard numerical methods, we obtained an approximate bifurcation point
satisfying~$H_{ns}(x,\lambda,w,u,a,b) = 0$, for the function~$H_{ns}$ in~(\ref{eq:hopf42}),
and with values for~$R$, $\lambda$, $x_1$, and~$P$ as stated in
Table~\ref{tab:hopfproof}. Since~$H_{ns}$ is parameter free, we only seek rigorous
solutions of the extended system in~(\ref{eq:hopf42}) which satisfy $H_{ns} = 0$
in~$\R^{42}$. Thus we only need to verify the hypotheses of the constructive
implicit function theorem which involve the values of~$\rho$, $K$, $L_1$,
and~$\ell_x > 0$ at our computed approximation point. See also Theorem~\ref{nift:thm}.
Table \ref{tab:hopfproof} summarizes  the constants found for the validation of the
solution of system (\ref{eq:hopf42}).

We obtain the bounds~$\rho$ and~$K$ by using interval arithmetic. While the bound~$\rho$
can be found in a straightforward way, the constant~$K$ cannot easily be found by using
interval arithmetic to compute matrix inverses. Therefore, we first compute an approximate
numerical inverse. However, we still need a bound on the exact inverse, and a bound
on the accuracy of the approximate inverse. This is required in both the computation
of~$K$ and twice when we verify condition~(e). The required quantities can be determined
using the following lemma. While we apply this lemma only for matrices, it is stated for
the case of Banach spaces. 
\begin{lemma}[Inverse bounds]
Let~$A$ be a bounded linear operator between two Banach spaces, and let~$B$ be an
approximate inverse of~$A$. Assume further that
\begin{displaymath}
  \| I  - BA \| \le \rho_1 < 1
  \qquad\mbox{ as well as }\qquad
  \|B\| \le \rho_2 \; .
\end{displaymath}
Then~$A$ is one-to-one, onto, and we have both
\begin{displaymath}
  \| A^{-1} \| \le \frac{\rho_2}{1-\rho_1}
  \qquad\mbox{ and }\qquad
  \| B- A^{-1} \| \le \frac{\rho_1 \rho_2}{1-\rho_1} \; .
\end{displaymath}
\end{lemma}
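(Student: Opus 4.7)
The proof is a Neumann series argument tailored to the approximate-inverse setting. First I would write $BA = I - (I - BA)$ and, using the hypothesis $\|I - BA\| \le \rho_1 < 1$, invoke the standard Neumann series result: the series $\sum_{n=0}^{\infty}(I-BA)^n$ converges absolutely in operator norm, its limit is a two-sided inverse of $BA$, and it satisfies
\begin{displaymath}
  \left\|(BA)^{-1}\right\| \;\le\; \sum_{n=0}^\infty \rho_1^n \;=\; \frac{1}{1-\rho_1}.
\end{displaymath}
So $BA$ is a bijection of the domain of $A$ onto itself.

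Next I would deduce invertibility of $A$ and bound $\|A^{-1}\|$. The candidate inverse is $A^{-1} = (BA)^{-1}B$. One checks directly that this is a left inverse: $[(BA)^{-1}B]\,A = (BA)^{-1}(BA) = I$. Combined with the previous paragraph, this is enough in the finite-dimensional matrix setting actually used in the paper, since a square matrix possessing a left inverse is automatically invertible; in that case $\|A^{-1}\| \le \|(BA)^{-1}\|\,\|B\| \le \rho_2/(1-\rho_1)$, which is the first claimed estimate. This is the step where the statement is most delicate: in a genuinely infinite-dimensional Banach space one additionally needs to rule out a nontrivial cokernel of $A$, and the hypotheses as stated do not obviously do this without an analogous bound $\|I-AB\| < 1$ or a supplementary argument using, e.g., that $B$ is itself invertible. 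I would flag this as the main obstacle and either (i) restrict the statement to the case in which $A$ and $B$ act on the same space and one of them is known to be invertible, or (ii) add the symmetric condition on $AB$. Since the paper's uses of the lemma are exclusively for square matrices, neither fix is needed for the applications.

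Finally, for the bound on $\|B - A^{-1}\|$ I would multiply out the identity
\begin{displaymath}
  (B - A^{-1})\,A \;=\; BA - I,
\end{displaymath}
and then right-multiply by $A^{-1}$ to obtain $B - A^{-1} = (BA - I)\,A^{-1}$. Taking operator norms and combining with the bound from the previous step yields
\begin{displaymath}
  \|B - A^{-1}\| \;\le\; \|I - BA\|\cdot\|A^{-1}\| \;\le\; \rho_1\cdot\frac{\rho_2}{1-\rho_1} \;=\; \frac{\rho_1\rho_2}{1-\rho_1},
\end{displaymath}
which is the second claimed inequality. Apart from the subtlety about two-sided invertibility flagged above, the whole argument is a direct calculation once the Neumann series is in hand.
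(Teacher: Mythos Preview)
Your proposal is correct and follows essentially the same approach as the paper: the paper simply says that the bound on~$\|A^{-1}\|$ ``is due to a Neumann series argument'' (citing an earlier paper for details) and that the second bound follows from the inequality $\|B - A^{-1}\| \le \|I - BA\|\,\|A^{-1}\|$, which is exactly the identity you derive. Your flagging of the two-sided invertibility subtlety in the general Banach space setting is a valid observation that the paper does not address; since the lemma is only applied to square matrices in the paper, this does not affect any of the applications.
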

The bound on~$A^{-1}$ is due to a Neumann series argument, and the proof can
be found in~\cite{sander:wanner:16a}. In addition, the second bound is a
consequence of $\| B - A^{-1}\| \le \|I-BA\| \|A^{-1}\|$.

Having described how the constants~$\rho$ and~$K$ can be estimated rigorously,
we now turn our attention to the Lipschitz constant~$L_1$. It can be determined
using the mean value theorem for multivariate functions from the calculations
in~(\ref{eq:mult}) below. For this, suppose that the function $H_{ns}: \R^m \rightarrow \R^m$
is differentiable and let $h_{ij}(x) = (\partial (H_{ns})_i / \partial x_j)(x)$.
Then $h_{ij}: \R^m \rightarrow \R$, and we let $h : \R^m \rightarrow \R^{m \times m}$
denote the matrix-valued function with entries~$h_{ij}$. Throughout our computations,
we used the maximum norms for vectors~$x$, and the induced matrix norm for matrices~$A$.
Recall that one then has $\Vert x \Vert = \Vert x \Vert_\infty = \max_{i=1,\ldots,m}
\vert x_i \vert$, as well as $\Vert A \Vert = \Vert A \Vert_\infty = \max_{i=1,\ldots,m}
\sum_{j=1}^m \vert A_{ij} \vert$. After these preparations, the mean value theorem
implies
\begin{displaymath}
  |h_{ij}(x) - h_{ij}(y)| \leq \max_{c \in D} \Vert \nabla h_{ij}(c) \Vert_1
    \, \Vert x-y \Vert \; , 
\end{displaymath}
where~$D$ denotes the line segment between the points~$x$ and~$y$. Together with
the definition of the functions~$h_{ij}$ one further obtains
\begin{eqnarray*}
  |h_{ij}(x) - h_{ij}(y)| & \leq &
    \max_{c \in D} \left\lVert \left(\dfrac{\partial^2 (H_{ns})_i}{\partial x_1
    \partial x_j}(c), \ldots, \dfrac{\partial^2 (H_{ns})_i}{\partial x_n
    \partial x_j}(c) \right) \right\rVert_1 \, \Vert x - y \Vert \\[1ex]
  & \leq & m \max\limits_{c \in D, \, k=1,\ldots,m}
    \left\lvert \dfrac{\partial^2 (H_{ns})_i}{\partial x_k \partial x_j}(c)
    \right\rvert \, \Vert x-y \Vert \; .
\end{eqnarray*}
This finally furnishes
\begin{eqnarray}
  \Vert h(x) - h(y) \Vert & = &
    \max_{i=1,\ldots,m} \sum_{j=1}^m \vert h_{ij}(x) - h_{ij}(y) \vert
    \nonumber \\[1ex]
  & \leq & \max_{i=1,\ldots,m} \sum_{j=1}^m \left( m
    \max\limits_{c \in D, \, k=1,\ldots,m} \left\lvert
    \dfrac{\partial^2 (H_{ns})_i}{\partial x_k \partial x_j}(c) \right\rvert
    \right) \Vert x-y \Vert \, . \label{eq:mult}
\end{eqnarray}
The factor in front of~$\Vert x-y \Vert$ on the right-hand side is then the
Lipschitz constant~$L_1$, and it can be determined via interval arithmetic 
and automatic differentiation.

Altogether, our rigorous computer-assisted proof of Theorem~\ref{thm:neimarksacker}
can be summarized as follows. After completing the validation of the conditions 
that guarantee that the constructive implicit function theorem holds, we are able
to verify the accuracy and uniqueness regions for the bifurcation point. In addition,
we can use Intlab~\cite{rump:99a} to rigorously show that the Jacobian matrix
$D_xf(\lambda_0,u_0)$ has in fact only two eigenvalues on the unit circle, by 
verifying that the remaining eleven eigenvalues all lie inside the unit disk.
This implies that a bifurcation occurs within the specified error of the
approximate bifurcation point. We then verify that this bifurcation is indeed 
a Neimark-Sacker bifurcation by showing that conditions~(c), (d), and~(e) hold
using interval arithmetic on these conditions. Here are a few remarks which give
a more detailed explanation:
\begin{itemize}
\item For each condition, we show that the interval containing the exact answer
does not contain zero for~(c) and~(e), and does not contain any of the avoided
angles for~(d).
\item While we are able to work with real-valued quantities $a,b,u,v$ in the
initial calculations of parts (a) and (b), we must switch to the complex case to verify the extra
conditions (c), (d), (e), and we normalize the complex vectors~$p$ and~$q$ using the normalization
condition $\langle p,q \rangle = 1$.
\item We need to be able to guarantee that all three conditions are satisfied for the
entire accuracy region. Therefore we evaluate these conditions on an interval vector
whose midpoint is the approximate bifurcation point, and whose radius is~$\delta_1$.
That is, every component of the vector is an interval. The actual computed values of
the conditions (c)-(e) are intervals, but the values given in Table~\ref{tab:hopfproof}
are the worst-case scenario values. Even with the interval calculations, conditions~(c)
and~(d) are known to more than four significant digits, but condition~(e) is only
known to three digits of accuracy.
\end{itemize}
This completes the proof of Theorem~\ref{thm:neimarksacker}.


\subsection{Validation of the saddle-node bifurcation point}
\label{subsec:saddle}

In this section, we use a computer-assisted proof to show that there is a
saddle-node bifurcation point in the coral model. The precise result can
be stated as follows.
\begin{theorem}[Saddle-node bifurcation point] \label{thm:saddle-node}
The coral model in~(\ref{eqn:model1}) and~(\ref{eqn:model2}) has a saddle-node
bifurcation point near the basic reproduction number $R_* \approx 12.28$, which
corresponds to the parameter value $\lambda_* \approx 0.4213$, and for polyp
population density $P_* \approx 853.4$. The precise error bounds are stated in 
Table~\ref{tab:snproof}. 
\end{theorem}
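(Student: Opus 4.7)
The plan is to mirror the strategy used for the Neimark-Sacker bifurcation in Section~\ref{subsec:hopf}, adapted to the classical conditions for a saddle-node bifurcation. Recall that a saddle-node occurs at $(\lambda_*,x_*)$ provided that (a) $f(\lambda_*,x_*)=x_*$, (b) the Jacobian $D_x f(\lambda_*,x_*)$ has a simple eigenvalue equal to~$1$ with no other eigenvalues on the unit circle, (c) the transversality condition $\langle p, D_\lambda f(\lambda_*,x_*)\rangle \ne 0$ holds, and (d) the quadratic coefficient $\langle p, B(q,q)\rangle \ne 0$, where $q$ and $p$ are right and left eigenvectors for the eigenvalue~$1$ (normalized so that $\langle p,q\rangle = 1$) and $B$ is the second-derivative form introduced in~(\ref{eqn:ABCexpansion}).

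Conditions~(a) and~(b) would be encoded as zeros of a parameter-free extended system $H_{sn}:\R^{2d+1}\to\R^{2d+1}$ defined by
\begin{equation*}
  H_{sn}(x,\lambda,q) = \left(
    \begin{array}{c}
      f(\lambda,x) - x \\[0.5ex]
      D_x f(\lambda,x) q - q \\[0.5ex]
      \| q \|^2 - 1
    \end{array}
  \right) \; .
\end{equation*}
Starting from a numerically computed approximate zero $(x_*,\lambda_*,q_*)$, I would apply the constructive implicit function theorem (Theorem~\ref{nift:thm}) in its parameter-free form, with $\ell_\alpha = 0$ and $L_2 = L_3 = L_4 = 0$, exactly as was done for $H_{ns}$ in~(\ref{eq:hopf42}). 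This requires an interval-arithmetic residual bound~$\rho$, an inverse-norm bound~$K$ on $DH_{sn}$ at the approximate zero (obtained by preconditioning with an approximate numerical inverse and invoking the inverse-bounds lemma), and a Lipschitz constant~$L_1$ on $DH_{sn}$ via the mean-value estimate~(\ref{eq:mult}) combined with automatic differentiation. A diagonal rescaling of the coordinates analogous to Section~\ref{sec:preconditioning} should be used to keep the problem well-conditioned, and the resulting accuracy and isolation constants would be recorded in Table~\ref{tab:snproof}.

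Once a validated zero of $H_{sn}$ is produced within an accuracy ball of radius $\delta_1$, I would close the proof by separately verifying the remaining requirements on an interval enclosure of that ball. Namely, I would use Intlab to confirm that the other twelve eigenvalues of $D_x f(\lambda_*,x_*)$ stay strictly off the unit circle (so that $1$ is in fact a simple eigenvalue), compute a rigorous enclosure of the left eigenvector~$p$ (applying the inverse-bounds lemma to the transposed eigenvector equation together with the normalization $\langle p,q\rangle = 1$), and then evaluate the expressions in~(c) and~(d) on the interval enclosure to confirm that neither contains zero. The main obstacle I anticipate is obtaining a sufficiently small inverse bound~$K$ for $DH_{sn}$: near the saddle-node, the block $D_x f - I$ inside $DH_{sn}$ is nearly singular, so the interaction between the fixed-point block and the eigenvector block is delicate, and without careful preconditioning the norm of $(DH_{sn})^{-1}$ can blow up. Balancing the scales of the $x$, $\lambda$, and $q$ coordinates, and ensuring that the numerical inverse used for preconditioning is accurate enough to make $\|I - B \, DH_{sn}\|$ safely below~$1$, should therefore be the key quantitative step.
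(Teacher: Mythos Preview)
Your proposal is correct and follows essentially the same route as the paper: the same extended system $H_{sn}:\R^{2d+1}\to\R^{2d+1}$ encoding conditions~(a) and~(b), the same parameter-free application of Theorem~\ref{nift:thm} with preconditioning, the inverse-bounds lemma, and the mean-value Lipschitz estimate~(\ref{eq:mult}), followed by a separate interval-arithmetic check of~(c) and~(d) using an Intlab-verified left eigenvector~$p$. The only minor deviation is that you additionally require and verify that the remaining twelve eigenvalues lie off the unit circle; the classical saddle-node theorem (and the paper's proof) only needs the eigenvalue~$1$ to be simple, so this extra check is harmless but unnecessary.
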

\begin{table}[tb] \centering
  \begin{tabular}{|c|c|c|c|c|c|}
    \hline
   $R$ & $\lambda$ & $x_1$ & $P$ & $\delta_1$ & $\delta_2$ \\
    \hline
   $12.28$ & $0.4213$ & $569.5$ & $853.4$ & $3.306 \cdot 10^{-12}$ &
     $4.015 \cdot 10^{-7}$\\
        \hline\hline
      $\rho$ & $K$ & $L_1$ & (c) & (d) & \\
      \hline    
     $1.653 \cdot 10^{-12}$& $1$ & $1.245 \cdot 10^{6}$ & $-353.4$ & $-9.924 \cdot 10^{-4}$&\\
    \hline
  \end{tabular}
  \caption{Validation constants for the extended system in~(\ref{eqn:sn}) at the
  saddle-node bifurcation point. All values are written up to four decimal places.
  For more efficient computation, we multiplied by a preconditioning matrix and
  obtained the bounds~$\rho$, $K$, and~$L_1$. We selected a matrix close to the
  Jacobian matrix of~$H_{sn}$, whose inverse was used as a preconditioner. The accuracy
  constant~$\delta_1$ and the isolation bound~$\delta_2$ were derived using~$\rho$,
  $K$, and~$L_1$. For the two conditions~(c) and~(d), which were checked separately
  after the validation involving~$H_{sn}$, we used an interval arithmetic enclosure of
  the approximate solution with radius~$\delta_1$.}
  \label{tab:snproof}
\end{table}%
As in the previous subsection, the remainder of the present one is devoted to the
verification of this theorem via computer-assisted rigorous methods. In order to
establish the theorem, we need to verify the following conditions from the
{\em classical saddle-node bifurcation theorem\/}, see for example~\cite{kuznetsov:98a}.
Let $f: \R \times \R^d \to \R^d$ be a smooth mapping. Furthermore, assume the following
four conditions:
\begin{enumerate}
\item[(a)] {\em Existence of a fixed point:\/} The map~$f$ has a fixed point at a specific
parameter value, i.e., we assume that $f(\lambda_0,x_0) = x_0$.
\item[(b)] {\em Simple eigenvalue~1:\/} The Jacobian matrix $D_x f(\lambda_0,x_0)$
has a simple eigenvalue of~$1$. Let~$p$ and~$q$ denote the corresponding left and
right eigenvectors, and suppose they are normalized to satisfy $p^t q = 1$.
\item[(c)] {\em Transversality condition:\/} Using the above notation we assume
\begin{displaymath}
  p^t D_{\lambda}f(\lambda_0,x_0) \ne 0 \; .
\end{displaymath}
\item[(d)] {\em Nondegeneracy condition:\/} Now let $A(\lambda_0) = D_xf(\lambda_0,x_0)$,
and consider the expansion of~$f$ given in~(\ref{eqn:ABCexpansion}). Then we suppose
further that
\begin{displaymath}
  p^t B(q,q) \ne 0 \; .
\end{displaymath}
\end{enumerate}
Then the classical saddle-node bifurcation theorem guarantees a saddle-node
bifurcation at the pair~$(\lambda_0,x_0)$.

In order to validate our bifurcation point using this theorem, we use again an
extended system of the form~$H_{sn} = 0$ to validate conditions~(a) and~(b), and
then we verify conditions~(c) and~(d) separately afterwards. This time,
the extended mapping~$H_{sn}$ is a map $H_{sn}: \R^{27} \to \R^{27}$, and it is
defined as
\begin{equation}\label{eqn:sn}
  H_{sn}(x,v,\lambda) = \left(
  \begin{array}{c}
    f(\lambda,x) - x \\[0.5ex]
    D_x f(\lambda,x) v - v \\[0.5ex]
    \|v\|^2 - 1
  \end{array} \right) \; .
\end{equation}
In order to validate~(c), and~(d), we use interval arithmetic for both of
these conditions, and show that~$0$ does not lie in the interval containing
the resulting answer. Note that the vector~$q$ is just a multiple of~$v$,
and~$p$ can be found in a verified way using Intlab~\cite{rump:99a}. The
summary of the constants of this validation process is given in
Table~\ref{tab:snproof}. This computer-assisted proof is quite similar
to the one used for the Neimark-Sacker bifurcation in the last subsection, 
and therefore we do not give any more elaboration on the technique used to
compute these values. This completes the proof of Theorem~\ref{thm:saddle-node}.


\subsection{Validation of the transcritical bifurcation point}
\label{subsec:transcritical}

We close this section by showing that there is indeed a transcritical
bifurcation on the trivial solution curve, i.e., the extinction curve.
This time, it is not necessary to perform a computer-assisted proof, as
the bifurcation can be established directly by hand.
\begin{theorem} [Transcritical bifurcation point]

For the coral population model in~(\ref{eqn:model1}) and~(\ref{eqn:model2})
there exists a transcritical bifurcation point for basic reproduction
number $R_* = c_2/c_1 \approx 72.22$, which corresponds to the parameter
value $\lambda_* = R_*/(b \cdot a)$ and to $x_* = 0 \in \R^{13}$.
Recall that the constants~$c_1$ and~$c_2$ were introduced in~(\ref{eqn:phi}),
and the vectors~$a$ and~$b$ were defined in~(\ref{def:a}) and the following 
paragraph.
\end{theorem}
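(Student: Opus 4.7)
The plan is to exploit the one-dimensional reduction derived in Section~\ref{sec:fp}. Every fixed point has the form $x = x_1 a$, with $x_1$ satisfying the scalar equation $x_1 = \lambda (b \cdot a)\, x_1 \, \phi(\gamma x_1)$, where $\gamma = \frac{1}{\Omega}\sum_{k=2}^d p_k a_k > 0$. Factoring out $x_1$ separates the trivial branch $\{x_1 = 0\}$ (present for every $\lambda$) from the nontrivial equation $G(\lambda,x_1) := \lambda (b\cdot a)\,\phi(\gamma x_1) - 1 = 0$. At $x_1 = 0$ one reads off $\phi(0) = c_1/c_2$ directly from~(\ref{eqn:phi}), which immediately yields $\lambda_* = c_2/(c_1(b\cdot a))$ and the claimed $R_* = \lambda_*(b\cdot a) = c_2/c_1 \approx 72.22$.

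Next I would apply the classical implicit function theorem to $G$ at the point $(\lambda_*,0)$. Since $\partial G/\partial \lambda\big|_{(\lambda_*,0)} = (b\cdot a)\phi(0) = c_1(b\cdot a)/c_2 \neq 0$, one obtains a smooth local branch $\lambda = \psi(x_1)$, with $\psi(0) = \lambda_*$, of nontrivial solutions of $G=0$. This branch, together with the trivial branch, gives the two curves of fixed points through $(\lambda_*,0)$ that are expected at a transcritical bifurcation.

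To check that the bifurcation is truly transcritical, rather than degenerate, I would verify that the two branches cross transversally, which reduces to $\psi'(0) \neq 0$, equivalently $\phi'(0) \neq 0$. A short quotient-rule computation gives $\phi'(0) = c_1(\beta - \alpha)/c_2$, and the numerical values in~(\ref{eqn:phi}) yield $\beta - \alpha = 2.9 \cdot 10^{-3} \neq 0$. Thus the two branches meet transversally at $(\lambda_*,0)$, and one can solve for $\lambda$ as a strictly monotone function of $x_1$ locally, so the nontrivial branch passes through $x_1 = 0$ with nonzero slope.

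Finally, to link this bifurcation analytically to the numerically observed stability exchange on the trivial branch, I would note that $D_x f(\lambda,0)$ equals the Leslie matrix $L(\lambda,0)$ (the nonlinear correction vanishes at $x=0$), whose standard characteristic equation $1 = \sum_{k=1}^d a_k \lambda b_k \phi(0)/\mu^k$ evaluated at $\mu = 1$ recovers exactly the condition $\lambda(b\cdot a)\phi(0) = 1$ defining $\lambda_*$. The only real obstacle is the simplicity of this eigenvalue $1$, which I would establish from the primitivity of $L(\lambda_*,0)$ (the fertility vector has two consecutive positive entries starting at age class $3$, so the standard Perron-Frobenius theory for Leslie matrices applies) together with the fact that the Perron root crosses $1$ at $\lambda_*$; the remaining eigenvalues lie strictly inside the unit disk, giving the stability exchange characteristic of a transcritical bifurcation.
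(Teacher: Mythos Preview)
Your argument is correct and takes a genuinely different route from the paper's.  The paper stays in the full $13$-dimensional system: it computes the right and left eigenvectors $v=a$ and $w$ of $D_xf(\lambda_*,0)$ explicitly (with $w$ given by a backward recursion $w_k = b_k + S_k w_{k+1}$), and then verifies the textbook transcritical conditions $w^t D_{x\lambda}f(\lambda_*,0)v \neq 0$ and $w^t D_{xx}f(\lambda_*,0)[v,v] \neq 0$ directly.  You instead exploit the exact one-dimensional reduction of the fixed-point set from Section~\ref{sec:fp}, factor off the trivial branch, and apply the scalar implicit function theorem to the remaining factor $G(\lambda,x_1)=\lambda(b\cdot a)\phi(\gamma x_1)-1$; your transversality condition $\phi'(0)=c_1(\beta-\alpha)/c_2\neq 0$ is exactly equivalent to the paper's quadratic nondegeneracy condition, since the latter evaluates to $2(\beta-\alpha)\Omega^{-1}\sum_{k\ge 2}p_k a_k$.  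Your approach is more economical---no left eigenvector, no $13$-dimensional bilinear form---while the paper's has the virtue of being parallel in structure to the saddle-node and Neimark--Sacker verifications in the adjacent subsections.  Your final paragraph handling the simplicity of the eigenvalue~$1$ via Perron--Frobenius for the primitive Leslie matrix $L(\lambda_*,0)$ is a necessary complement (the scalar reduction captures only the fixed-point set, not the linearized dynamics), and it is a clean replacement for the paper's determinant computation $\det(D_xf(\lambda,0)-I)=\lambda - c_2/(c_1(b\cdot a))$.
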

\begin{proof}
It is clear from the model that $x = 0$ is a fixed point for all values of
the parameter~$\lambda$. Furthermore, one can easily show that
\begin{displaymath}
  \det (D_xf(\lambda,0) - I) =
  \lambda - \frac{c_2}{c_1 (b \cdot a)} \; .
\end{displaymath}
Therefore, the Jacobian matrix of~$f(\lambda,\cdot)$ at the origin has a
simple eigenvalue of~1 if and only if~$\lambda$ equals
\begin{displaymath}
  \lambda_* = \frac{c_2}{c_1 (b \cdot a)} \; .
\end{displaymath}
Now denote the right and left eigenvectors of~$D_xf(\lambda_*,x_*)$ by~$v$
and~$w$, respectively. One can show directly that $v = a$ defined in~(\ref{def:a}), and 
$w$ is such that 
\begin{displaymath}
  w_1 = b \cdot a\; , \quad
  w_{d} = b_{d} \; ,
  \quad\mbox{ and }\quad
  w_k = b_k + S_k w_{k+1}
  \quad\mbox{ for }\quad
  k=2,\dots,d-1 \; .
\end{displaymath}
Then in order
to establish the transcritical bifurcation, two nondegeneracy conditions have to 
be verified. Since we have $w^t D_\lambda f(\lambda_*,x_*) = 0$, one first has
to show that
\begin{displaymath}
  w^t D_{x\lambda} f(\lambda_*,x_*)v = 
  \frac{(b \cdot a) c_1}{c_2} w^t b  
\end{displaymath}
is nonzero, which is clearly satisfied since all the terms of $b$ and $w$ are non-negative, 
and contains terms of the form $b_k^2$ (which are strictly positive for each nonzero $b_k$). 

Second, we need to show that
$w^t D_{xx}f(\lambda_*,x_*)[v,v] \ne 0$. Since only the first component of~$f$,
which we call~$f_1$, is nonlinear, one merely needs to consider the second
derivative of this component function. We get the following formula. 
\begin{displaymath}
  D_{xx}f_1(\lambda_*,x_*)[v,v] =   \frac{ 2 (\beta-\alpha)  }{\Omega} \sum_{k=2}^d p_k a_k . 
\end{displaymath}
By looking at the corresponding parameter values, this value is also nonzero, and
therefore the second nondegeneracy condition holds. This completes the proof of
the theorem. 
\end{proof}
%
%


\section{Conclusion}
\label{sec:con}

In this paper, we have considered an age-structured population model for red coral populations
with a parameter of fitness. When the fitness increases sufficiently, a set of stable invariant
closed curves of oscillating orbits form, and these stable curves persist for large values of
the fitness parameter. It is not surprising that for small fitness parameters, solutions limit
to extinction, but we see that even for large fitness, populations become extremely vulnerable,
as they limit to oscillation spending long period of time near extinction. 

The coral population model has a curve of fixed points containing a Neimark-Sacker, saddle-node,
and transcritical bifucation point. We develop new methods based on previous computer-assisted
proof methods  and use these methods to validate the branch of fixed points, and the three
bifurcation points. 

\section*{Acknowledgments}
We would like to thank Konstantin Mischaikow for pointing us to this coral population model. 
This research was partially supported by NSF grant DMS-1407087.
In addition, E.S.\ and T.W.\ were partially supported by the
Simons Foundation under Awards~636383 and~581334, respectively.

%
%

\addcontentsline{toc}{section}{References}
\footnotesize
%
%
\bibliography{wanner1a,wanner1b,wanner2a,wanner2b,wanner2c,coralbib}

\begin{thebibliography}{10}

\bibitem{coral1}
R.~Aubourg.
\newblock Red coral in the {M}editerranean sea.
\newblock {\em Wikimedia Commons}, 2 November 2017.

\bibitem{beslin:etal:98a}
S.~Beslin, D.~Baney, and V.~de~Angelis.
\newblock Small denominators: {N}o small problem.
\newblock {\em Mathematics Magazine}, 71(2):132--138, 1998.

\bibitem{bramanti:etal:09a}
L.~Bramanti, M.~Iannelli, and G.~Santangelo.
\newblock {Mathematical modelling for conservation and management of gorgonians
  corals: youngs and olds, could they coexist?}
\newblock {\em Ecological Modelling}, 220(21):2851--2856, 2009.

\bibitem{bramanti:etal:05a}
L.~Bramanti, G.~Magagnini, L.~D. Maio, and G.~Santangelo.
\newblock Recruitment, early survival and growth of the {M}editerranean red
  coral {C}orallium rubrum ({L} 1758), a 4-year study.
\newblock {\em Journal of Experimental Marine Biology and Ecology},
  314(1):69--78, 2005.

\bibitem{capinski:etal:20a}
M.~J. Capinski, E.~Fleurantin, and J.~D. {Mireles James}.
\newblock Computer assisted proofs of two-dimensional attracting invariant tori
  for odes.
\newblock {\em Discrete and Continuous Dynamical Systems, Series A}, 2020.
\newblock To appear.

\bibitem{crandall:rabinowitz:71a}
M.~G. Crandall and P.~H. Rabinowitz.
\newblock Bifurcation from simple eigenvalues.
\newblock {\em Journal of Functional Analysis}, 8:321--340, 1971.

\bibitem{das:etal:16a}
S.~Das, C.~Dock, Y.~Saiki, M.~Salgado-Flores, E.~Sander, J.~Wu, and J.~Yorke.
\newblock Measuring quasiperiodicity.
\newblock {\em Europhysics Letters}, 114(4):40005, 2016.

\bibitem{coral2}
P.~G{\'e}ry.
\newblock Corallium rubrum ({L}innaeus, 1758) - {B}anyuls-sur-{M}er, {S}ec de
  {R}{\'e}d{\'e}ris: 08/84.
\newblock {\em Wikimedia Commons}, 31 July 2011.

\bibitem{govaerts:00}
W.~J.~F. Govaerts.
\newblock {\em Numerical methods for bifurcations of dynamical equilibria}.
\newblock Society for Industrial and Applied Mathematics (SIAM), Philadelphia,
  PA, 2000.

\bibitem{keller:87}
H.~B. Keller.
\newblock {\em Lectures on numerical methods in bifurcation problems},
  volume~79 of {\em Tata Institute of Fundamental Research Lectures on
  Mathematics and Physics}.
\newblock Published for the Tata Institute of Fundamental Research, Bombay; by
  Springer-Verlag, Berlin, 1987.
\newblock With notes by A. K. Nandakumaran and Mythily Ramaswamy.

\bibitem{kuznetsov:98a}
Y.~A. Kuznetsov.
\newblock {\em Elements of Applied Bifurcation Theory}.
\newblock Springer-Verlag, New York, second edition, 1998.

\bibitem{lessard:sander:wanner:17a}
J.-P. Lessard, E.~Sander, and T.~Wanner.
\newblock Rigorous continuation of bifurcation points in the diblock copolymer
  equation.
\newblock {\em Journal of Computational Dynamics}, 4(1--2):71--118, 2017.

\bibitem{rump:99a}
S.~M. Rump.
\newblock {INTLAB - INTerval LABoratory}.
\newblock In T.~Csendes, editor, {\em {Developments~in~Reliable Computing}},
  pages 77--104. Kluwer Academic Publishers, Dordrecht, 1999.
\newblock http://www.ti3.tuhh.de/rump/.

\bibitem{sander:meiss:20a}
E.~Sander and J.~D. Meiss.
\newblock Birkhoff averages and rotational invariant circles for
  area-preserving maps.
\newblock {\em Physica D}, 411:132569, 2020.

\bibitem{sander:wanner:16a}
E.~Sander and T.~Wanner.
\newblock Validated saddle-node bifurcations and applications to lattice
  dynamical systems.
\newblock {\em SIAM Journal on Applied Dynamical Systems}, 15(3):1690--1733,
  2016.

\bibitem{sander:wanner:20a}
E.~Sander and T.~Wanner.
\newblock Equilibrium validation in models for pattern formation based on
  {S}obolev embeddings.
\newblock {\em Discrete and Continuous Dynamical Systems, Series B}, 2020.
\newblock To appear.

\bibitem{santangelo:etal:07a}
G.~Santangelo, L.~Bramanti, and M.~Iannelli.
\newblock {Population dynamics and conservation biology of the over-exploited
  Mediterranean red coral}.
\newblock {\em Journal of Theoretical Biology}, 244(3):416--423, 2007.

\bibitem{vandenberg:etal:p20a}
J.~B. van~den Berg, J.-P. Lessard, and E.~Queirolo.
\newblock Rigorous verification of {H}opf bifurcations via desingularization
  and continuation.
\newblock {\em arXiv:2006.13373 [math.DS]}, 2020.

\bibitem{wanner:17a}
T.~Wanner.
\newblock Computer-assisted equilibrium validation for the diblock copolymer
  model.
\newblock {\em Discrete and Continuous Dynamical Systems, Series A},
  37(2):1075--1107, 2017.

\bibitem{wanner:18a}
T.~Wanner.
\newblock Computer-assisted bifurcation diagram validation and applications in
  materials science.
\newblock {\em Proceedings of Symposia in Applied Mathematics}, 74:123--174,
  2018.

\end{thebibliography}
\bibliographystyle{abbrv}
\end{document}